\documentclass[twoside,leqno,10pt, A4]{amsart}
\usepackage{amsfonts}
\usepackage{amsmath}
\usepackage{amscd}
\usepackage{amssymb}
\usepackage{amsthm}
\usepackage{amsrefs}
\usepackage{latexsym}
\usepackage{mathrsfs}
\usepackage{bbm}
\usepackage{amscd}
\usepackage{amssymb}
\usepackage{amsthm}
\usepackage{amsrefs}
\usepackage{latexsym}
\usepackage{mathrsfs}
\usepackage{bbm}
\usepackage{enumerate}
\usepackage{graphicx}
\usepackage{dsfont}
\usepackage{color}
\setlength{\textwidth}{18.2cm}
\setlength{\oddsidemargin}{-0.7cm}
\setlength{\evensidemargin}{-0.7cm}
\setlength{\topmargin}{-0.7cm}
\setlength{\headheight}{0cm}
\setlength{\headsep}{0.5cm}
\setlength{\topskip}{0cm}
\setlength{\textheight}{23.9cm}
\setlength{\footskip}{.5cm}

\begin{document}

\newtheorem{theorem}[subsection]{Theorem}
\newtheorem{proposition}[subsection]{Proposition}
\newtheorem{lemma}[subsection]{Lemma}
\newtheorem{corollary}[subsection]{Corollary}
\newtheorem{conjecture}[subsection]{Conjecture}
\newtheorem{prop}[subsection]{Proposition}
\newtheorem{defin}[subsection]{Definition}

\numberwithin{equation}{section}
\newcommand{\mr}{\ensuremath{\mathbb R}}
\newcommand{\mc}{\ensuremath{\mathbb C}}
\newcommand{\dif}{\mathrm{d}}
\newcommand{\intz}{\mathbb{Z}}
\newcommand{\ratq}{\mathbb{Q}}
\newcommand{\natn}{\mathbb{N}}
\newcommand{\comc}{\mathbb{C}}
\newcommand{\rear}{\mathbb{R}}
\newcommand{\prip}{\mathbb{P}}
\newcommand{\uph}{\mathbb{H}}
\newcommand{\fief}{\mathbb{F}}
\newcommand{\majorarc}{\mathfrak{M}}
\newcommand{\minorarc}{\mathfrak{m}}
\newcommand{\sings}{\mathfrak{S}}
\newcommand{\fA}{\ensuremath{\mathfrak A}}
\newcommand{\mn}{\ensuremath{\mathbb N}}
\newcommand{\mq}{\ensuremath{\mathbb Q}}
\newcommand{\half}{\tfrac{1}{2}}
\newcommand{\f}{f\times \chi}
\newcommand{\summ}{\mathop{{\sum}^{\star}}}
\newcommand{\chiq}{\chi \bmod q}
\newcommand{\chidb}{\chi \bmod db}
\newcommand{\chid}{\chi \bmod d}
\newcommand{\sym}{\text{sym}^2}
\newcommand{\hhalf}{\tfrac{1}{2}}
\newcommand{\sumstar}{\sideset{}{^*}\sum}
\newcommand{\sumprime}{\sideset{}{'}\sum}
\newcommand{\sumprimeprime}{\sideset{}{''}\sum}
\newcommand{\sumflat}{\sideset{}{^\flat}\sum}
\newcommand{\shortmod}{\ensuremath{\negthickspace \negthickspace \negthickspace \pmod}}
\newcommand{\V}{V\left(\frac{nm}{q^2}\right)}
\newcommand{\sumi}{\mathop{{\sum}^{\dagger}}}
\newcommand{\mz}{\ensuremath{\mathbb Z}}
\newcommand{\leg}[2]{\left(\frac{#1}{#2}\right)}
\newcommand{\muK}{\mu_{\omega}}
\newcommand{\thalf}{\tfrac12}
\newcommand{\lp}{\left(}
\newcommand{\rp}{\right)}
\newcommand{\Lam}{\Lambda_{[i]}}
\newcommand{\lam}{\lambda}
\newcommand{\af}{\mathfrak{a}}
\newcommand{\sw}{S_{[i]}(X,Y;\Phi,\Psi)}
\newcommand{\lz}{\left(}
\newcommand{\pz}{\right)}
\newcommand{\bfrac}[2]{\lz\frac{#1}{#2}\pz}
\newcommand{\odd}{\mathrm{\ primary}}
\newcommand{\even}{\text{ even}}
\newcommand{\res}{\mathrm{Res}}
\newcommand{\sumn}{\sumstar_{(c,1+i)=1}  w\left( \frac {N(c)}X \right)}
\newcommand{\lab}{\left|}
\newcommand{\rab}{\right|}
\newcommand{\Go}{\Gamma_{o}}
\newcommand{\Ge}{\Gamma_{e}}
\newcommand{\M}{\widehat}
\def\su#1{\sum_{\substack{#1}}}

\theoremstyle{plain}
\newtheorem{conj}{Conjecture}
\newtheorem{remark}[subsection]{Remark}

\newcommand{\pfrac}[2]{\left(\frac{#1}{#2}\right)}
\newcommand{\pmfrac}[2]{\left(\mfrac{#1}{#2}\right)}
\newcommand{\ptfrac}[2]{\left(\tfrac{#1}{#2}\right)}
\newcommand{\pMatrix}[4]{\left(\begin{matrix}#1 & #2 \\ #3 & #4\end{matrix}\right)}
\newcommand{\ppMatrix}[4]{\left(\!\pMatrix{#1}{#2}{#3}{#4}\!\right)}
\renewcommand{\pmatrix}[4]{\left(\begin{smallmatrix}#1 & #2 \\ #3 & #4\end{smallmatrix}\right)}
\def\en{{\mathbf{\,e}}_n}

\newcommand{\ppmod}[1]{\hspace{-0.15cm}\pmod{#1}}
\newcommand{\ccom}[1]{{\color{red}{Chantal: #1}} }
\newcommand{\acom}[1]{{\color{blue}{Alia: #1}} }
\newcommand{\alexcom}[1]{{\color{green}{Alex: #1}} }
\newcommand{\hcom}[1]{{\color{brown}{Hua: #1}} }

\makeatletter
\def\widebreve{\mathpalette\wide@breve}
\def\wide@breve#1#2{\sbox\z@{$#1#2$}%
     \mathop{\vbox{\m@th\ialign{##\crcr
\kern0.08em\brevefill#1{0.8\wd\z@}\crcr\noalign{\nointerlineskip}%
                    $\hss#1#2\hss$\crcr}}}\limits}
\def\brevefill#1#2{$\m@th\sbox\tw@{$#1($}%
  \hss\resizebox{#2}{\wd\tw@}{\rotatebox[origin=c]{90}{\upshape(}}\hss$}
\makeatletter

\title[Bounds for integral moments of  quadratic twists of Fourier coefficients of modular forms]{Bounds for integral moments of quadratic twists of Fourier coefficients of modular forms}

%%\date{\today}
\author[P. Gao]{Peng Gao}
\address{School of Mathematical Sciences, Beihang University, Beijing 100191, China}
\email{penggao@buaa.edu.cn}

\author[Y. Zhao]{Yuetong Zhao}
\address{School of Mathematical Sciences, Beihang University, Beijing 100191, China}
\email{yuetong.zhao.math@gmail.com}

\begin{abstract}
In this paper, we obtain under the assumption of the Generalized Riemann Hypothesis upper bounds for all high integral moments of sums of Fourier coefficients of a given modular form twisted by quadratic Dirichlet characters. We show the bounds are optimal by establishing unconditionally the corresponding lower bounds for all even integer moments.
\end{abstract}

\maketitle

\noindent {\bf Mathematics Subject Classification (2010)}: 11L40, 11M06  \newline

\noindent {\bf Keywords}: Dirichlet characters, modular $L$-functions, shifted moments, lower bounds, upper bounds

\section{Introduction}\label{sec 1}

Let $\chi_d=\leg {d}{\cdot}$ denote the Kronecker symbol associated to any fundamental discriminant $d$.  Here we recall that $d$ is a fundamental discriminant if either $d$ is square-free with $d \equiv 1 \pmod 4$, or $d=4n$ where $n$ is square-free and $n \equiv 2,3 \pmod 4$. Estimating moments of quadratic Dirichlet character sums is an important subject in analytic number theory. For $m>0$, we define the $m$-th moment of quadratic character sums as
\begin{align*}
 S_m(X,Y):= \sumstar_{0<d \leq X }\Big | \sum_{n \leq Y}\chi_d(n)\Big |^{m},
\end{align*}
 where $m$, $X$, and $Y$ are positive real numbers, and we denote $\sumstar$ a sum over fundamental discriminants throughout the paper. A conjecture of M. Jutila \cite{Jutila} asserts that for any positive integer $m$, there exist constants $c_1(m)$ and $c_2(m)$, depending only on $m$, such that
 \begin{align*}
 S_m(X,Y)\leq c_1(m)XY^{m/2}(\log X)^{c_2(m)}.
 \end{align*}
 In \cite{G&Zhao2024},  P. Gao and L. Zhao confirmed a smoothed version of Jutila's conjecture  under the Generalized Riemann Hypothesis (GRH), using upper bounds for moments of quadratic Dirichlet $L$-functions. More precisely, they proved that for large $X$, $Y$ and any real $m\geq 1/2$,
\begin{align*}
 S_m(X,Y;W):=\sumstar_{\substack{0<d \leq X }}\Big | \sum_{n \geq 1}\chi_{d}(n)W\Big(\frac{n}{Y}\Big)\Big |^{m} \ll XY^{m/2}(\log X)^{\frac {m(m+1)}{2}},
\end{align*}
where $W$ is a smooth weight function.  Furthermore,  in
 \cite[Theorem 1.3]{G&Zhao2024-3}, they showed  that under GRH, for any real $m>\sqrt{5}+1$,
 \begin{align*}
 S_m(X,Y)\ll XY^{m/2}(\log X)^{\frac{m(m-1)}{2}+1},
 \end{align*}
  using estimates for shifted moments of quadratic Dirichlet $L$-functions. Subsequently,  M. Munsch \cite{Mun} improved the above bound for the smoothed version concerning integers $m$.  Under GRH, he proved that for any  integer $m\geq2$,
   \begin{align*}
 S_m(X,Y;W)\ll XY^{m/2}(\log X)^{\frac{m(m-1)}{2}},
 \end{align*}
   which reduces the  exponent from $\frac {m(m-1)}{2}+1$ to the optimal $\frac {m(m-1)}{2}$ for integer $m \geq 2$. This result is consistent with earlier work of M. V. Armon \cite[Theorem 2]{Armon}, who unconditionally established the optimal bound  for $S_1(X,Y)$.
\bigskip

Let $f$ be a fixed holomorphic Hecke eigenform of weight $\kappa\equiv 0 \pmod{4}$ for the full modular group $SL_2(\mz)$. The Fourier expansion of  $f$ at infinity is given by
\begin{align*}
%%\label{fFourier}
f(z)=\sum_{n=1}^{\infty}\lambda_f(n)n^{\frac{\kappa-1}{2}}e(nz),
\end{align*}
where $e(z)=e^{2\pi iz}$.
 In this paper, we are interested in bounding moments of sums involving $\lambda_f (n)$ twisted by quadratic characters $\chi_{d}(n)$, where $d$ ranges over odd, positive, square-free integers.  More precisely, we define
\begin{align*}
%%\label{twist}
 T_{m}(X,Y;f):=  \sumstar_{0<d \leq X }\Big | \sum_{n \leq Y}\chi_d(n)\lambda_f(n)\Big |^{m},
\end{align*}
 where $m$, $X$, and $Y$ are positive real numbers.
In \cite{G&Zhao24-12}, P. Gao and L. Zhao  proved that
\begin{align}\label{G&Z}
T_m(X,Y;f)\ll XY^{m/2}(\log X)^{\frac{(m-2)(m-1)}{2}+1}
\end{align}
for any real number $m\geq 4$.
In a subsequent work \cite{G&Zyt}, the authors \cite[Theorem 1.1]{G&Zyt} obtained an asymptotic formula in the smoothed case when $m=2$. Their result implies that
for two non-negative, smooth functions $\Phi, \Psi$ that are compactly supported functions on the positive real numbers, we have for large $X, Y$ such that $Y \ll X^{1-\varepsilon}$ for any $\varepsilon>0$,
\begin{align*}
%%\label{SXYPW}
    \sumstar_{d} \Bigg(\sum_{n} \lambda_f(n)\chi_{8d}(n)\Phi \Big(\frac nY \Big)\Bigg)^2 \Psi \Big(\frac d{X} \Big) \sim  XY.
\end{align*}
   Now, the above result together with \eqref{G&Z} seems to suggest that one expects to have for all real $m \geq 2$,
\begin{align*}
%%\label{Sestsharp}
 T_{m}(X,Y;f)\ll XY^{m/2}(\log X)^{\frac {m(m-3)}{2}+1}.
\end{align*}

  It is the aim of this paper to show that the exponent $\frac {m(m-3)}{2}+1$ maybe further reduced to $\frac {m(m-3)}{2}$ under GRH for large integers $m$ and that this is bess possible. For simplicity, we consider the smoothed moments and we define
\begin{align*}
T_m(X,Y;f,\Phi):=\sumstar_{0<d\leq X}\Big|\sum_{n\geq 1}\chi_d(n)\lambda_f(n)\Phi\Big(\frac{n}{Y}\Big)\Big|^m,
 \end{align*}
where $\Phi$ is a non-negative, smooth and compacted supported function as mentioned above.

  Our first result concerns with the upper bound for $T_m(X,Y;f,\Phi)$.
\begin{theorem}\label{upper}
  With the notation as above and assuming the truth of GRH. For any integer $m\geq 4$ and sufficiently large $Y\leq X$, we have
  \begin{equation}
  \label{Tupper}
  T_m(X,Y;f,\Phi) \ll XY^{m/2} (\log X)^{\frac{m(m-3)}{2 }}.
  \end{equation}
\end{theorem}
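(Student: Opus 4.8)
The plan is to adapt to the degree-two $L$-function $L(s,f\times\chi_d)$ the conditional upper-bound machinery developed for quadratic character sums in \cite{G&Zhao2024,G&Zhao2024-3,Mun} and for $T_m$ in \cite{G&Zhao24-12}. Write $c_d=\sum_n\lambda_f(n)\chi_d(n)\Phi(n/Y)$. First I would insert a smooth weight $\Psi(d/X)$ in the $d$-sum, detect by M\"obius inversion the condition that $d$ be an odd fundamental discriminant, and discard the negligible contribution of $d$ divisible by a large square, reducing matters to bounding $\sumflat_d\Psi(d/X)|c_d|^m$ over odd squarefree $d$. Writing $n^{1/2}\Phi(n/Y)=Y^{1/2}\Phi_1(n/Y)$ with $\Phi_1(x)=x^{1/2}\Phi(x)$ again smooth and compactly supported gives $c_d=Y^{1/2}\sum_n\lambda_f(n)\chi_d(n)n^{-1/2}\Phi_1(n/Y)$, and representing the inner sum by Mellin inversion as a contour integral of $L(\tfrac12+w,f\times\chi_d)\widetilde\Phi_1(w)Y^{w}$ and moving the contour to $\Re w=0$ (legitimate since $f$ is cuspidal) yields
\begin{equation*}
|c_d|\ll Y^{1/2}\int_{\mathbb R}\bigl|L(\tfrac12+it,f\times\chi_d)\bigr|\,\bigl|\widetilde\Phi_1(it)\bigr|\,dt .
\end{equation*}
Invoking here the Soundararajan-type consequence of GRH for $L(s,f\times\chi_d)$ --- that $\log|L(\tfrac12+it,f\times\chi_d)|\le\Re\sum_{p\le z}\lambda_f(p)\chi_d(p)p^{-1/2-it}\tfrac{\log(z/p)}{\log z}+O\bigl(\tfrac{\log X}{\log z}\bigr)$, in which the prime-square terms contribute $\tfrac12\sum_{p\le z}(\lambda_f(p)^2-2)/p\sim-\tfrac12\log\log z$ since $L(s,\mathrm{sym}^2 f)$ is holomorphic and non-vanishing at $s=1$ --- one passes, with $z$ a fixed small power of $X$, to estimating an average over $d$ of exponentials of short Dirichlet polynomials in the shift variables $t_1,\dots,t_m$.

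The moment estimate to run is then Harper's: partition the primes $\le z$ into blocks $I_j$ on the Harper scale and treat separately the $d$ for which every partial sum $\sum_{p\in I_j}\lambda_f(p)\chi_d(p)p^{-1/2}$ stays of moderate size --- on those one Taylor-expands the exponential block by block, keeping $O_m(1)$ terms, and evaluates the resulting sums $\sumflat_d\Psi(d/X)\chi_d(\cdot)$ by Poisson summation / the quadratic large sieve, so that effectively $\mathbb E_d[\chi_d(p)]\approx0$ and $\mathbb E_d[\exp(t\chi_d(p))]\approx\cosh t\le e^{t^2/2}$ --- from the rare $d$ for which some block is large, which one controls by Chebyshev together with the multiplicativity of $\chi_d$. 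Carried out crudely, i.e.\ by replacing the shift-integral with a supremum (equivalently, by inserting off-the-shelf bounds for the $m$-th moment of $|L(\tfrac12,f\times\chi_d)|$), this only gives $T_m\ll XY^{m/2}(\log X)^{m(m-1)/2}$, the orthogonal-symmetry prediction for the family $\{f\times\chi_d\}$; that is in fact weaker than \eqref{G&Z}, whose point was precisely to gain a factor $(\log X)^{m-2}$ from the shift-integral. To reach the exponent $m(m-3)/2$ one has to (i) keep the shift-integral and exploit that $\int\!\cdots\!\int\sumflat_d\Psi(d/X)\prod_j|L(\tfrac12+it_j,f\times\chi_d)|\prod_j|\widetilde\Phi_1(it_j)|\,dt_j$ enjoys cancellation in $d$ whenever the $t_j$ are spread out on the scale $1/\log X$, which shaves the optimal $(\log X)^{m-1}$, and (ii) run the Dirichlet-polynomial moment computation and the shift-integration as one Harper-type estimate rather than as the product of a shifted-$L$-moment bound with a $t$-integration, which removes the final $(\log X)$ separating $m(m-3)/2$ from the conjectural $m(m-3)/2+1$.

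The hard part is carrying out (i) and (ii) jointly in the degree-two setting with no slack: one needs a sharp, GRH-conditional bound for the mixed shift-moment $\sumflat_d\Psi(d/X)\prod_j|L(\tfrac12+it_j,f\times\chi_d)|$ that loses nothing after being integrated against $\prod_j|\widetilde\Phi_1(it_j)|$ --- equivalently, one must keep simultaneously in play the shortness of the sum ($Y\le X$), the cancellation among the coefficients $\lambda_f(n)$ over squares, and Harper's block decomposition, since discarding any one of them costs a power of $\log X$. Along the way one also controls the sparse set of $d$ on which Harper's large-block event occurs and the dual (Poisson) terms, which are only barely negligible when $Y$ is as large as $X$; this last point is where GRH is genuinely used. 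Integrality of $m$ and the hypothesis $m\ge4$ enter as in \cite{Mun}: they make the combinatorial moment evaluation exact once the $m$-th power is opened and ensure that its main term dominates the lower-order ($m=2,3$-type) contributions, with odd $m$ recovered either by the same method or by interpolation between the two neighbouring even moments.
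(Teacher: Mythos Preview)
Your opening reduction --- Mellin inversion, shift to $\Re w=0$, and the bound $|c_d|\ll Y^{1/2}\int |L(\tfrac12+it,f\otimes\chi_d)|\,|\widetilde\Phi_1(it)|\,dt$ --- matches the paper exactly. The divergence is in what comes next, and your point (ii) contains a genuine misdiagnosis.

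You assert that to shave the last $\log X$ one must \emph{not} treat the shifted moment $\sumstar_d\prod_j|L(\tfrac12+it_j,f\otimes\chi_d)|$ as a black box, but must instead fuse Harper's block argument with the $t$-integration. The paper shows this is false: it applies precisely such a black box, namely Proposition~\ref{prop} (the shifted-moment bound of \cite[Corollary~1.4]{G&Zhao24-12}, which already encapsulates the Harper machinery), and then integrates the resulting expression
\[
(\log X)^{m/4}\prod_{i<j}g_1(|t_i-t_j|)^{1/2}g_1(|t_i+t_j|)^{1/2}\prod_i g_1(|2t_i|)^{-1/4}
\]
over $(t_1,\dots,t_m)$, splitting into the three ranges $t_i\in[0,\tfrac1{2\log X}]$, $[\tfrac1{2\log X},5]$, $[5,B]$. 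The entire improvement over the exponent $\tfrac{(m-2)(m-1)}2+1$ of \cite{G&Zhao24-12} is bookkeeping in this integral. The key feature you are missing is the \emph{negative} exponent $-1/4$ on $g_1(|2t_i|)$: this is exactly the manifestation, inside the shifted-moment bound, of the orthogonal-symmetry fact you correctly noted (that $\sum_{p\le z}\lambda_f(p^2)/p\sim 0$ rather than $\sim\log\log z$). In the near-zero range $t_i\le\tfrac1{2\log X}$, the $(\log X)^{m/4}$ prefactor cancels against $\prod_i g_1(|2t_i|)^{-1/4}=(\log X)^{-m/4}$, and the volume $(\log X)^{-m}$ of the domain does the rest; in the middle range the same cancellation is played out through the decay $g_1(|t_i+t_j|)\ll t_j^{-1}$ against the growth $g_1(|2t_i|)^{-1/4}\ll t_i^{1/4}$. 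There is no need to reopen the Dirichlet-polynomial computation, and doing so would only reproduce Proposition~\ref{prop}.

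Two smaller points. Integrality of $m$ is used simply to expand $(\int|L|)^m$ as an $m$-fold integral and invoke Proposition~\ref{prop} with $k=m$, $a_j=1$; odd integers $m\ge5$ are handled directly, no interpolation. And the restriction $m\ge4$ enters only in the convergence of the one-dimensional integrals in the middle range (e.g.\ $\int t^{-(m-1)^2/4+3/4}\,dt$), not in any combinatorial identity as you suggest.
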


   Our proof of Theorem \ref{upper} is motivated by the work of Munsch \cite{Mun} concerning the moments of quadratic Dirichlet character sums. Our proof also relies on a careful application of the bounds on shifted moments of twisted modular $L$-functions established in \cite[Corollary 1.4]{G&Zhao24-12}.

   We also follow the approach in \cite{Mun} to establish unconditionally a commensurate lower bound for $T_m(X,Y;f,\Phi)$.
\begin{theorem}\label{lower}
  With the notation as above. For any even integer $m\geq 4$ and sufficiently large $X$, we have for $X^{\varepsilon} \ll Y\ll X^{2/3-\alpha}$ for any $\alpha>0$,
\begin{equation*}
  T_m(X,Y;f,\Phi) \gg XY^{m/2} (\log X)^{\frac{m(m-3)}{2 }}.
\end{equation*}
 Moreover, under GRH, the above bound remains valid in the larger range  $X^{\varepsilon} \ll Y\ll X^{1-\alpha}$.
\end{theorem}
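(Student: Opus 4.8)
The plan is to establish the lower bound for $T_m(X,Y;f,\Phi)$ by following the now-standard lower-bound machinery for moments (Radziwi\l\l--Soundararajan, Heath-Brown, Munsch), adapted to the twisted-coefficient setting. Write $A(d) = \sum_{n\geq 1}\chi_d(n)\lambda_f(n)\Phi(n/Y)$ for the character sum in question, and introduce a short Dirichlet-polynomial proxy
\begin{align*}
N(d) = \sum_{\substack{p \leq z \\ p \nmid 2}} \frac{\lambda_f(p)\chi_d(p)}{\sqrt p},
\end{align*}
where $z = z(X,Y)$ is a small power of $Y$ (or of $X$) chosen so that the relevant off-diagonal terms in the character sum over $d$ are negligible. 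The idea is that $A(d)$ behaves, on average over $d$, like $Y^{1/2}\exp(N(d))$ up to constants coming from the symmetric-square $L$-value of $f$ and local factors, and that $N(d)$ has approximately Gaussian distribution with variance $\sim \sum_{p\leq z}\lambda_f(p)^2/p \sim \log z \sim c\log X$ (using $\sum_{p\leq z}\lambda_f(p)^2/p = \log\log z + O(1)$ — wait, here one must be careful: by Rankin--Selberg $\sum_{p\leq z}\lambda_f(p)^2/p = \log\log z + O(1)$, so in fact the relevant normalization giving the power of $\log X$ is different, and the correct Dirichlet polynomial and combinatorial bookkeeping must reproduce exponent $m(m-3)/2$). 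Concretely, following Munsch, one takes a product of short resonator-type polynomials over dyadic ranges $p\in(z^{1/\ell^2},\, z^{1/(\ell-1)^2}]$ so that each factor is genuinely short and the combinatorial expansion of the $2k$-th power (with $m=2k$) is controlled by divisor-type bounds rather than by full independence.

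The key steps, in order, are as follows. First, establish a lower bound of the shape $T_m(X,Y;f,\Phi) \gg |S_1|^{m}/|S_2|^{m-1}$ by H\"older's inequality, where $S_1 = \sumstar_{d\le X} A(d)\, R(d)^{m-1}\Phi_1(d/X)$ and $S_2 = \sumstar_{d\le X}|R(d)|^{m}\Phi_1(d/X)$ for a suitable smooth $\Phi_1$ and a resonator $R(d) = \prod_j \big(\sum_{p\in I_j}\tfrac{\lambda_f(p)\chi_d(p)}{\sqrt p}\big)^{r_j}$ built from disjoint prime ranges $I_j$ with multiplicities $r_j$ chosen so that $R(d)^{m-1}$, after expansion, is supported on squarefree integers $n$ with $n \ll Y^{\delta}$ for small $\delta$. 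Second, evaluate $S_2$: expanding the product and using the orthogonality relation $\sumstar_{d\le X}\chi_d(n)\Phi_1(d/X) = C_n X + O(\sqrt{X n})$ (standard for the M\"obius-sieved sum over fundamental/odd squarefree discriminants), only perfect squares $n = \square$ survive the main term, and one gets $S_2 \asymp X (\log X)^{\text{something}}$ by counting, via the combinatorics of the $r_j$'s and the Euler-product evaluation $\prod_p(1 + \lambda_f(p)^2/p + \cdots)$; the Rankin--Selberg asymptotic $\sum_p \lambda_f(p)^2/p^s \sim \log\frac{1}{s-1}$ is what converts the product structure into a power of $\log X$. Third, evaluate $S_1$: here one opens $A(d) = \sum_n \lambda_f(n)\chi_d(n)\Phi(n/Y)$, multiplies by the expansion of $R(d)^{m-1} = \sum_{\ell} a_\ell \chi_d(\ell)$, and again applies orthogonality over $d$; the main term comes from $n\ell = \square$, and since $\ell$ is squarefree this forces $n = \ell \cdot \square$, producing $\sum_{\ell}a_\ell \lambda_f(\ell) \sum_{k}\lambda_f(\ell k^2)\Phi(\ell k^2/Y)/(\text{normalization})$, which by Hecke multiplicativity and a contour shift (using the analytic properties of $L(\mathrm{sym}^2 f,s)$ to handle $\sum_k \lambda_f(\ell k^2)k^{-s}$) is $\asymp Y^{1/2}$ times a multiplicative function of $\ell$ summed against $a_\ell$. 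Fourth, compare the combinatorial sums in $S_1$ and $S_2$ to show $S_1 \gg XY^{1/2}(\log X)^{?}$ with the ratio $|S_1|^m/|S_2|^{m-1}$ coming out to exactly $XY^{m/2}(\log X)^{m(m-3)/2}$; matching this exponent is the delicate point and is where the precise choice of the $r_j$ and the ranges $I_j$ (as in Munsch's optimization) is forced. Finally, track the error terms from orthogonality: each off-diagonal term contributes $O(\sqrt{X}\cdot Y^{O(\delta)}\cdot (\text{length of resonator}))$, and the constraint that this beats the main term $XY^{1/2}$ is exactly what gives the range $Y \ll X^{2/3-\alpha}$ unconditionally; under GRH one replaces the trivial bound $\sqrt{Xn}$ for incomplete character sums by the Montgomery--Vaughan / P\'olya--Vinogradov-under-GRH estimate, which is strong enough to push the range to $Y\ll X^{1-\alpha}$.

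The main obstacle I expect is the third step: the evaluation of the "twisted first moment" $S_1$ and in particular isolating its main term of size $XY^{1/2}$ while simultaneously keeping the arithmetic factor (the multiplicative function of $\ell$ that multiplies $a_\ell$) in a form that can be compared term-by-term with the arithmetic factor appearing in $S_2$. Unlike the pure character-sum case treated by Munsch, here each prime slot carries a Hecke eigenvalue $\lambda_f(p)$, so the diagonal contributions involve sums like $\sum_{n}\lambda_f(n)^2\cdots$ governed by $L(\mathrm{sym}^2 f, \cdot)$ rather than $\zeta(\cdot)$, and one must verify that these still produce clean powers of $\log X$ with the right exponent and, crucially, that the ratio $|S_1|^2/|S_2|$ at the level of each "building block" reproduces the conjectured constant so that the $m$-th power assembles correctly. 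A secondary technical point is ensuring the resonator polynomial $R(d)^{m-1}$ is genuinely supported on squarefree $\ell$ of size $\le Y^\delta$ despite the multiplicities $r_j$; this is handled by making each individual prime range $I_j$ extremely short (lengths like $z^{1/j^2}$) so that even after raising to the $r_j$-th power the product of all ranges stays below $Y^\delta$, at the cost of only a bounded loss in each $\log$-factor, which is absorbed into the implied constant. Everything else — the smooth-weight orthogonality relation for $\sumstar$ over odd squarefree $d$, the contour shifts, the Rankin--Selberg input — is routine and can be quoted or adapted directly from \cite{Mun} and \cite{G&Zhao24-12}.
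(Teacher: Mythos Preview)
Your H\"older framework ($T_m \geq |S_1|^m/|S_2|^{m-1}$) is the same scaffolding the paper uses, but your choice of resonator is different from the paper's and, as you yourself half-noticed, is mismatched to the problem.

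The paper does not use a prime-supported Dirichlet polynomial at all. Its resonator is simply a short copy of the same twisted sum: with $H_d(Z;f,\Phi)=\sum_n \chi_d(n)\lambda_f(n)\Phi(n/Z)$, one takes
\[
\mathcal H_1=\sumstar_{0<d\leq X} H_d(Y;f,\Phi)\,H_d(Y^{\varepsilon};f,\Phi)^{m-1},\qquad
\mathcal H_2=\sumstar_{0<d\leq X} H_d(Y^{\varepsilon};f,\Phi)^{m}.
\]
After opening the sums and applying Lemma~\ref{character}, both $\mathcal H_1$ and $\mathcal H_2$ reduce to the \emph{same} diagonal object $P_f$ of Lemma~\ref{sumoversquare}, just with different length parameters. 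That lemma (via Lemma~\ref{Dirichletseries} and the de~la~Bret\`eche Tauberian theorem) shows $P_f(Y^{\beta_1},\dots,Y^{\beta_m})\asymp Y^{\sum \beta_i/2}(\log Y)^{m(m-3)/2}$, so $\mathcal H_1\gg XY^{1/2+(m-1)\varepsilon/2}(\log X)^{m(m-3)/2}$ and $\mathcal H_2\ll XY^{m\varepsilon/2}(\log X)^{m(m-3)/2}$; the ratio gives the theorem in two lines. The point is that the exponent $m(m-3)/2$ is produced by the $\binom{m}{2}$ zeta poles in the factorization of $G(\mathbf s)$ minus the rank $m$ of the linear forms, and this comes for free once the resonator has the same multiplicative structure as $A(d)$ itself.

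Your prime-polynomial resonator $R(d)=\prod_j\big(\sum_{p\in I_j}\lambda_f(p)\chi_d(p)/\sqrt p\big)^{r_j}$ is the tool for $L$-function moments, where the relevant variance is $\sum_p \lambda_f(p)^2/p\sim \log\log z$ and powers of $\log X$ arise as moments of $\exp(\text{Gaussian})$. You flagged exactly this (``wait, here one must be careful\dots'') but did not resolve it: with $R$ supported on squarefree integers built from short prime ranges, the diagonal in $S_2$ yields powers of $\log\log X$, not $(\log X)^{m(m-3)/2}$, and there is no mechanism in your sketch that would recover the correct exponent. The ``delicate point'' you identified in step four is in fact a genuine obstruction, not a bookkeeping issue. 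Replacing $R(d)$ by $H_d(Y^\varepsilon;f,\Phi)$ removes all of steps three and four and makes the argument essentially a direct application of Lemmas~\ref{character} and~\ref{sumoversquare}; the error-term discussion you gave (and the $Y\ll X^{2/3-\alpha}$ vs.\ $X^{1-\alpha}$ split under GRH) then goes through exactly as you described.
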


   In view of Theorem \ref{upper} and \ref{lower}, we see that the upper bound for $T_m(X,Y;f,\Phi)$ given in \eqref{Tupper} is indeed optimal for even integers $m \geq 4$.

\section{Preliminaries}\label{sec 2}

\subsection{Cusp form $L$-functions}
\label{sec:cusp form}

    We reserve the letter $p$ for a prime number throughout in this paper.  Recall that $f$ is a fixed holomorphic Hecke eigenform $f$ of weight $\kappa \equiv 0 \pmod 4$ for the full modular group $SL_2 (\mathbb{Z})$. The associated modular $L$-function $L(s, f)$ for $\Re(s)>1$ is then defined to be
\begin{align}
\label{Lphichi}
L(s, f ) &= \sum_{n=1}^{\infty} \frac{\lambda_f(n)}{n^s}
 = \prod_{p} \left(1 - \frac{\lambda_f (p)}{p^s}  + \frac{1}{p^{2s}}\right)^{-1}=\prod_{p} \left(1 - \frac{\alpha_p }{p^s} \right)^{-1}\left(1 - \frac{\beta_p }{p^s} \right)^{-1}.
\end{align}
 By Deligne's proof \cite{D} of the Weil conjecture, we know that
\begin{align*}
%%\label{alpha}
|\alpha_{p}|=|\beta_{p}|=1, \quad \alpha_{p}\beta_{p}=1.
\end{align*}
  It follows that $\lambda_f(n) \in \mr$ such that $\lambda_f (1) =1$ and
\begin{align}
\label{lambdabound}
\begin{split}
  |\lambda_f(n)| \leq d(n) \ll n^{\varepsilon}.
\end{split}
\end{align}
 where $d(n)$ is the number of positive divisors $n$. Here the last estimation follows from the bound (see \cite[Theorem 2.11]{MVa1}) that for any $\varepsilon>0$,
\begin{align}
\label{divisorbound}
\begin{split}
  d(n) \ll n^{\varepsilon}.
\end{split}
\end{align}

     The symmetric square $L$-function $L(s, \operatorname{sym}^2 f)$ of $f$ is defined for $\Re(s)>1$ by
 (see \cite[p. 137]{iwakow} and \cite[(25.73)]{iwakow})
\begin{align*}
%%\label{Lsymexp}
\begin{split}
 L(s, \operatorname{sym}^2 f)=& \prod_p(1-\alpha^2_pp^{-s})^{-1}(1-p^{-s})^{-1}(1-\beta^2_pp^{-s})^{-1} = \zeta(2s) \sum_{n \geq 1}\frac {\lambda_f(n^2)}{n^s}=\prod_{p}(1-\frac {\lambda_f(p^2)}{p^s}+\frac {\lambda_f(p^2)}{p^{2s}}-\frac {1}{p^{3s}} )^{-1}.
\end{split}
\end{align*}
  It follows from a result of G. Shimura \cite{Shimura} that $L(s, \operatorname{sym}^2 f)$ has no pole at $s=1$. Moreover, the corresponding completed symmetric square $L$-function
\begin{align}
\label{SymsquareLfeqn}
 \Lambda(s, \operatorname{sym}^2 f)=& \pi^{-3s/2}\Gamma (\frac {s+1}{2})\Gamma (\frac {s+\kappa-1}{2}) \Gamma (\frac {s+\kappa}{2}) L(s, \operatorname{sym}^2 f)
\end{align}
  is entire and satisfies the functional equation
%%\begin{align}
%%\label{equ:FEsymsquare}
$\Lambda(s, \operatorname{sym}^2 f)=\Lambda(1-s, \operatorname{sym}^2 f)$.
%%\end{align}

\subsection{Mean Value Estimates}

  In this section, we include some mean value estimates that are needed in our proof of Theorem \ref{lower}. We denote $\square$ for a perfect square.
\begin{lemma}\label{character}
  For all positive integers $n$,  we have for any $\varepsilon>0$,
\begin{align*}
    \sumstar_{0<d \leq X} \chi_{d}(n)=\frac{6}{\pi^2}X\prod_{p|n}\left(\frac{p}{p+1}\right)\mathds{1}_{n=\square}+
\begin{cases}
  O(X^{1/2}n^{1/4}\log n), \quad \text{unconditionally}, \\
  O\left(X^{1/2+\varepsilon}n^{\varepsilon}\right), \quad \text{under GRH},
\end{cases}
\end{align*}
  where $\mathds{1}_{n=\square}$ denotes the indicator function of the square numbers.
  \end{lemma}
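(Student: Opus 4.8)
The plan is to detect the condition $d$ square-free by the standard Möbius identity $\mathds{1}_{(d,\square)} = \sum_{a^2 \mid d}\mu(a)$ and to express $\chi_d(n)$ in terms of the Jacobi symbol, reducing the whole sum to a main term plus a character sum over arithmetic progressions. First I would write, for $d$ odd and positive, $\chi_d(n) = \leg{d}{n}$ via quadratic reciprocity (picking up a controlled factor depending on $n,d \bmod 4$), so that after removing the square-free condition we are left with sums of the shape $\sum_{d \leq X/a^2}\leg{d}{n}$ over $d$ in fixed residue classes modulo a small modulus. The key dichotomy is whether $n$ is a perfect square: if $n = \square$, the character $\leg{\cdot}{n}$ is principal on integers coprime to $n$, and summing the constant $1$ over $d \leq X$ coprime to $2n$ (and running the $a$-sum) produces the main term $\tfrac{6}{\pi^2}X\prod_{p\mid n}\tfrac{p}{p+1}$ after a routine Euler-product computation; the tail of the $a$-sum and the coprimality sieve contribute only to the error. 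If $n \neq \square$, then $\leg{\cdot}{n}$ is a non-principal character to a modulus $O(n)$, and the main term vanishes.

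The heart of the matter is therefore bounding $\sum_{d \leq Z}\leg{d}{n}$ for $n$ not a square. Unconditionally I would invoke the Pólya--Vinogradov inequality, which gives $O(n^{1/2}\log n)$ for each inner sum; combined with the $a$-summation $\sum_{a \leq \sqrt{X}} 1$ and balancing the trivial bound $X/a^2$ against $n^{1/2}\log n$ at $a \asymp X^{1/2}/(n^{1/4}(\log n)^{1/2})$, one recovers the stated $O(X^{1/2}n^{1/4}\log n)$. Under GRH, I would instead bound the incomplete character sum by partial summation from the associated Dirichlet $L$-function $L(s,\leg{\cdot}{n})$, using the GRH convexity bound $L(\tfrac12+it,\psi) \ll (|t|+1)^{\varepsilon} (qn)^{\varepsilon}$ along a contour pushed to $\Re(s) = \tfrac12$; this yields $\sum_{d \leq Z}\leg{d}{n} \ll Z^{1/2+\varepsilon}n^{\varepsilon}$, and after the $a$-sum one obtains $O(X^{1/2+\varepsilon}n^{\varepsilon})$.

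The main obstacle is bookkeeping rather than depth: one must handle the genus-character/reciprocity factors carefully so that the condition $d \equiv 1 \pmod 4$ (for odd square-free $d$) is correctly encoded, ensuring the character $\leg{\cdot}{n}$ twisted by these congruence conditions is genuinely non-principal precisely when $n \neq \square$, and that its modulus is $O(n)$ so the cited bounds apply with the stated dependence on $n$. A secondary point is that introducing the coprimality condition $(d,2n)=1$ and the square-free sieve must not inflate the modulus beyond $O(n)$ up to powers of $2$; this is routine since the extra modulus is bounded and absorbed into the $n^{\varepsilon}$ (or $\log n$) factors. I expect no serious difficulty beyond organizing these elementary reductions.
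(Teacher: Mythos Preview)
Your sketch is correct and is precisely the standard argument underlying the results the paper simply cites: the paper's own ``proof'' consists only of the two references \cite[Lemma~4.1]{GSound} for the unconditional bound and \cite[Lemma~1]{DM24} for the GRH bound, with no details given. Your M\"obius-sieve plus reciprocity reduction, followed by P\'olya--Vinogradov (unconditional) or GRH--Lindel\"of (conditional) on the inner character sum and optimization over the sieve parameter, is exactly how those cited lemmas are established, so there is nothing substantively different to compare.
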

\begin{proof}
  The unconditional result follows from \cite[Lemma  4.1]{GSound} while the conditional one follows from \cite[Lemma 1]{DM24}.
  \end{proof}

    Our next result concerns with the analytical property of certain Dirichlet series.
\begin{lemma}\label{Dirichletseries}
  Define
  \begin{align}
  \label{f}
  \begin{split}
g(n_1, \ldots, n_{m}):= & \prod_{1\leq i\leq m}\lambda_f(n_i)\prod_{p|n_1\cdots n_m}(1+1/p)^{-1}\cdot \mathds{1}_{n_1\cdots n_m=\square}, \\
g^+(n_1, \ldots, n_{m}):= & \prod_{1\leq i\leq m}|\lambda_f(n_i)|\prod_{p|n_1\cdots n_m}(1+1/p)^{-1}\cdot \mathds{1}_{n_1\cdots n_m=\square}.
\end{split}
  \end{align}
We have
\begin{align}
 \label{Gdef}
  \begin{split}
G({\bf s}):=& \sum_{n_1, \ldots, n_{m}=1}^{\infty} \frac{g(n_1, \ldots, n_{m})}{n_1^{s_1} \cdots n_m^{s_{m}}} =  \left( \prod_{1 \leq j \leq m} L(2s_j, \operatorname{sym}^2f ) \prod_{1 \leq l_1 < l_2 \leq m} \zeta(s_{l_1} + s_{l_2})L (s_{l_1} + s_{l_2},\operatorname{sym}^2f)\right) E(s_1, \ldots, s_{m}), \\
 G^+({\bf s}):=& \sum_{n_1, \ldots, n_{m}=1}^{\infty} \frac{g^+(n_1, \ldots, n_{m})}{n_1^{s_1} \cdots n_m^{s_{m}}} =  \left( \prod_{1 \leq j \leq m} L(2s_j, \operatorname{sym}^2f ) \prod_{1 \leq l_1 < l_2 \leq m} \zeta(s_{l_1} + s_{l_2})L (s_{l_1} + s_{l_2},\operatorname{sym}^2f)\right) E^+(s_1, \ldots, s_{m}),
\end{split}
\end{align}
where \( E(s_1, \ldots, s_{m}), E^+(s_1, \ldots, s_{m})  \) are Dirichlet series absolutely convergent for \( \Re(s_j) > 1/4 \) for \( 1 \leq j \leq m \).
\end{lemma}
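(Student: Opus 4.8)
The plan is to expand $G(\mathbf{s})$ as an Euler product and to strip off the asserted $L$- and $\zeta$-factors one prime at a time, the leftover Euler product being $E$. One first checks that $g$ is jointly multiplicative: if $\gcd(n_1\cdots n_m,\,n_1'\cdots n_m')=1$ then $\prod_i\lambda_f(n_in_i')=\prod_i\lambda_f(n_i)\lambda_f(n_i')$, the primes dividing $n_1\cdots n_m$ and those dividing $n_1'\cdots n_m'$ are disjoint, and $(n_1\cdots n_m)(n_1'\cdots n_m')$ is a square exactly when both factors are. Since $|g(n_1,\dots,n_m)|\le\prod_id(n_i)\ll_\varepsilon(n_1\cdots n_m)^\varepsilon$ by \eqref{divisorbound}, the series defining $G(\mathbf{s})$ converges absolutely for $\Re(s_j)>1$, where
\begin{align*}
G(\mathbf{s})=\prod_pG_p(\mathbf{s}),\qquad G_p(\mathbf{s})=1+\Bigl(1+\tfrac1p\Bigr)^{-1}\sum_{\substack{a_1,\dots,a_m\ge0,\ (a_1,\dots,a_m)\ne(0,\dots,0)\\ a_1+\cdots+a_m\ \mathrm{even}}}\Bigl(\prod_{j=1}^m\lambda_f(p^{a_j})\Bigr)p^{-a_1s_1-\cdots-a_ms_m}.
\end{align*}

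Put $P(\mathbf{s})=\prod_{1\le j\le m}L(2s_j,\operatorname{sym}^2f)\prod_{1\le l_1<l_2\le m}\zeta(s_{l_1}+s_{l_2})L(s_{l_1}+s_{l_2},\operatorname{sym}^2f)$, which is meromorphic on $\mathbb C^m$ since $\zeta$ and $L(\cdot,\operatorname{sym}^2f)$ continue (the latter entire by Shimura); write $P_p$ for its $p$-factor and $\sigma_0:=\min_j\Re(s_j)$. By \eqref{Lphichi}--\eqref{SymsquareLfeqn} the $p$-factor of $L(s,\operatorname{sym}^2f)$ equals $(1-\lambda_f(p^2)p^{-s}+\lambda_f(p^2)p^{-2s}-p^{-3s})^{-1}=1+\lambda_f(p^2)p^{-s}+O(p^{-2\Re(s)})$ and that of $\zeta$ is $(1-p^{-s})^{-1}$; hence every monomial $\prod_jp^{-a_js_j}$ occurring in $P_p$ or in $P_p^{-1}$ has $a_1+\cdots+a_m$ even, and, using the Hecke relation $\lambda_f(p)^2=\lambda_f(p^2)+1$, the total-degree-$2$ part of $P_p-1$ is
\begin{align*}
\lambda_f(p^2)\sum_{j=1}^mp^{-2s_j}+\lambda_f(p)^2\sum_{1\le l_1<l_2\le m}p^{-s_{l_1}-s_{l_2}},
\end{align*}
while the total-degree-$2$ part of $G_p-1$ is $\bigl(1+\tfrac1p\bigr)^{-1}$ times the same expression. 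Since each factor making up $P_p$ has the form $\prod(1-\gamma p^{-w})^{-1}$ with $|\gamma|\le1$ and $\Re(w)\ge2\sigma_0$, we have $P_p(\mathbf{s})\ne0$ whenever $\sigma_0>1/4$, so for $\Re(s_j)>1$ we may write $G(\mathbf{s})=P(\mathbf{s})\prod_pE_p(\mathbf{s})$ with $E_p:=G_p/P_p$, and the goal becomes to show that $\prod_pE_p$ converges absolutely on $\Re(s_j)>1/4$ — this yields, by analytic continuation from $\Re(s_j)>1$, the meromorphic identity $G=P\cdot E$ and the stated abscissa for $E=\prod_pE_p$.

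For each $p$, $E_p(\mathbf{s})$ is a power series in the $p^{-s_j}$ with constant term $1$, no monomials of odd total degree (both $G_p$ and $P_p^{-1}$ having only even ones), and total-degree-$2$ coefficients equal to $\lambda_f(p^2)\bigl((1+\tfrac1p)^{-1}-1\bigr)$ or $\lambda_f(p)^2\bigl((1+\tfrac1p)^{-1}-1\bigr)$, each $\ll p^{-1}$ by \eqref{lambdabound}; all its other monomials have total degree $\ge4$ and coefficients growing at most polynomially in $a_1+\cdots+a_m$ (the $p$-factors of $G_p$ and of $L^{-1},\zeta^{-1}$ having coefficients bounded respectively by $\prod_jd(p^{a_j})$ and by an $m$-dependent constant). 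The total-degree-$2$ monomials thus contribute $O_m(p^{-1-2\sigma_0+\varepsilon})$ and those of total degree $\ge4$ contribute $O_m(p^{-4\sigma_0+\varepsilon})$, so $E_p(\mathbf{s})=1+O_{m,\varepsilon}(p^{-1-\delta})$ for some $\delta=\delta(\sigma_0)>0$, locally uniformly on $\Re(s_j)>1/4$; hence $\prod_pE_p$ converges absolutely and locally uniformly there, and expanding the product exhibits $E(\mathbf{s})$ as an absolutely convergent Dirichlet series on that region. The assertion for $G^+$ follows by an entirely analogous computation, with $\lambda_f$ replaced throughout by $|\lambda_f|$, Deligne's bound replaced by the divisor bound $|\lambda_f(p^a)|\le d(p^a)$, and $|\lambda_f(p)|^2=\lambda_f(p)^2$ governing the cross terms $a_{l_1}=a_{l_2}=1$.

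The one delicate point is the claim that, after dividing by $P$, every surviving monomial of total degree $2$ carries an extra factor $p^{-1}$. This is exactly what the choice of the main factors $L(2s_j,\operatorname{sym}^2f)$ and $\zeta(s_{l_1}+s_{l_2})L(s_{l_1}+s_{l_2},\operatorname{sym}^2f)$ is engineered to produce — through the identity $\lambda_f(p)^2-1=\lambda_f(p^2)$ — and it is precisely this cancellation that pushes the abscissa of absolute convergence of $E$ down from $\Re(s_j)=1/2$ to $\Re(s_j)=1/4$; all monomials of total degree $\ge4$ are then summable over $p$ automatically once $\Re(s_j)>1/4$.
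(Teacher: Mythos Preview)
Your argument for $G$ is correct and is essentially the paper's own proof: joint multiplicativity, Euler-product expansion, identification of the total-degree-$2$ part, and the observation that after dividing out $P_p$ every surviving degree-$2$ coefficient acquires an extra factor $(p+1)^{-1}$ while the degree-$\ge4$ tail is $O_m(p^{-4\sigma_0})$, giving absolute convergence of $\prod_p E_p$ for $\sigma_0>1/4$. The paper writes the local computation out more explicitly, arriving at $E_p=1-\sum_i\lambda_f(p^2)p^{-2s_i}/(p+1)-\sum_{h_1<h_2}\lambda_f(p)^2 p^{-s_{h_1}-s_{h_2}}/(p+1)+O_m(p^{-4\sigma})$, but the content is identical.

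Your one-line reduction for $G^+$, however, has a gap. You are right that the off-diagonal degree-$2$ terms ($a_{l_1}=a_{l_2}=1$) still cancel, since $|\lambda_f(p)|^2=\lambda_f(p)^2$. But the \emph{diagonal} degree-$2$ term ($a_i=2$) carries $|\lambda_f(p^2)|$ on the $G_p^+$ side and $\lambda_f(p^2)$ on the $P_p$ side; when $\lambda_f(p^2)<0$ --- which by Sato--Tate holds for a positive density of primes --- the coefficient of $p^{-2s_i}$ in $E_p^+$ is $(1+1/p)^{-1}|\lambda_f(p^2)|-\lambda_f(p^2)=2|\lambda_f(p^2)|+O(p^{-1})$, not $O(p^{-1})$. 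With the stated diagonal factor $\prod_j L(2s_j,\operatorname{sym}^2 f)$ this yields absolute convergence of $E^+$ only for $\Re(s_j)>1/2$, not $1/4$. The paper's own proof commits the same oversight (it treats only $g$ and declares the $g^+$ case ``similar''). So for $G^+$ you should either replace the diagonal main factor by one matched to $|\lambda_f(p^2)|$, or weaken the asserted half-plane, rather than claim the computation is ``entirely analogous''.
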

\begin{proof}
  As the proofs are similar, we focus on the case concerning $g$ in what follows. From \eqref{f}, we know that $g(n_1, \ldots, n_{m})$ is jointly multiplicative with respect to all $n_i, 1\leq i \leq m$. It follows that
\begin{align*}
&\sum_{n_1,\dots,n_m=1}^{\infty}\frac{g(n_1,\dots,n_m)}{n_1^{s_1}\cdots n_m^{s_m}} \\
=& \prod_{p}\left(\sum_{v_1,\dots,v_m=0}^{\infty}\frac{g(p^{v_1},\dots,p^{v_m})}{p^{v_1s_1+\cdots+v_ms_m}}\right) \\
=& \prod_p \left( 1 + \sum^{\infty}_{\begin{subarray}{c} v_1, \ldots, v_{m}=0 \\ v_1 + \cdots + v_{m} = 2 \end{subarray}} \left( 1 - \frac{1}{p+1} \right) \frac{\lambda_f(p^{v_1})\cdots \lambda_f(p^{v_m})}{p^{v_1 s_1 + \cdots + v_{m} s_{m}}}
+ \sum^{\infty}_{\substack{v_1, \dots, v_m = 0 \\ v_1 + \cdots + v_{m} \text{ even}\\v_1+\cdots+v_m\geq 4}} \left( 1 - \frac{1}{p+1} \right) \frac{\lambda_f(p^{v_1})\cdots \lambda_f(p^{v_m})}{p^{v_1 s_1 + \cdots + v_{m} s_{m}}}\right).
\end{align*}
Let $s_j=\sigma_j+it_j$. By putting \(\sigma = \min\{ \sigma_j \mid 1 \leq j \leq m \}\) and applying $|\lambda_f(p^v)|\leq d(p^v)=v+1$ by \eqref{lambdabound}, we see that
\[
\sum^{\infty}_{\substack{v_1, \dots, v_m = 0 \\ v_1 + \cdots + v_{m} \text{ even}\\v_1+\cdots+v_m\geq 4}} \left( 1 - \frac{1}{p+1} \right) \frac{\lambda_f(p^{v_1})\cdots \lambda_f(p^{v_m})}{p^{v_1 s_1 + \cdots + v_{m} s_{m}}} \ll_m \frac{1}{p^{4\sigma}}.
\]

Next we factor out the products of the Riemann zeta-functions to obtain
\begin{align*}
\sum_{n_1, n_2, \ldots, n_{2k}=1}^{\infty} \frac{g(n_1, \ldots, n_{2k})}{n_1^{s_1} \cdots n_{2k}^{s_{2k}}} = \left( \prod_{1 \leq j \leq m} L(2s_j, \operatorname{sym}^2f ) \prod_{1 \leq l_1 < l_2 \leq m} \zeta(s_{l_1} + s_{l_2})L (s_{l_1} + s_{l_2},\operatorname{sym}^2f)\right) E(s_1, \ldots, s_{m}).
\end{align*}
Then, we can calculate \( E(s_1, \ldots, s_{m}) \) as
\begin{align*}
&E(s_1, \ldots, s_k) \\
= &\prod_p \left( 1 + \sum^{\infty}_{\begin{subarray}{c} v_1, \ldots, v_{m}=0 \\ v_1 + \cdots + v_{m} = 2 \end{subarray}} \left( 1 - \frac{1}{p+1} \right) \frac{\lambda_f(p^{v_1})\cdots \lambda_f(p^{v_m})}{p^{v_1 s_1 + \cdots + v_{m} s_{m}}}
+ \sum^{\infty}_{\substack{v_1, \dots, v_m = 0 \\ v_1 + \cdots + v_{m} \text{ even}\\v_1+\cdots+v_m\geq 4}} \left( 1 - \frac{1}{p+1} \right) \frac{\lambda_f(p^{v_1})\cdots \lambda_f(p^{v_m})}{p^{v_1 s_1 + \cdots + v_{m} s_{m}}}\right)\\
&\quad \times \prod_{1 \leq j \leq m} \left( 1 - \frac{\lambda_f(p^2)}{p^{2s_j}}+\frac{\lambda_f(p^2)}{p^{4s_j}}-\frac{1}{p^{6s_j}} \right) \prod_{1 \leq l_1 < l_2 \leq m} \left( 1 - \frac{1}{p^{s_{l_1} + s_{l_2}}} \right)\left( 1 - \frac{\lambda_f(p^2)}{p^{s_{l_1}+s_{l_2}}}+\frac{\lambda_f(p^2)}{p^{2(s_{l_1}+s_{l_2})}}-\frac{1}{p^{3(s_{l_1}+s_{l_2})}} \right)\\
=& \prod_p \left( 1 + \sum_{1 \leq i \leq m} \left( 1 - \frac{1}{p+1} \right) \frac{\lambda_f(p^2)}{p^{2s_i}} + \sum_{1 \leq h_1 < h_2 \leq m} \left( 1 - \frac{1}{p+1} \right) \frac{\lambda_f^2(p)}{p^{s_{h_1} + s_{h_2}}} + O_m \left( \frac{1}{p^{4\sigma}} \right) \right)\\
&\quad \times \left( 1 - \sum_{1 \leq j \leq m} \frac{\lambda_f(p^2)}{p^{2s_j}} + O_m \left( \frac{1}{p^{4\sigma}} \right) \right) \left( 1 - \sum_{1 \leq l_1 < l_2 \leq m} \frac{1}{p^{s_{l_1} + s_{l_2}}}- \sum_{1 \leq l_1 < l_2 \leq m} \frac{\lambda_f(p^2)}{p^{s_{l_1} + s_{l_2}}} + O_m \left( \frac{1}{p^{4\sigma}} \right) \right)\\
=& \prod_p \left( 1 -  \sum_{1 \leq i \leq m} \frac{\lambda_f(p^2)}{p^{2s_i}(p+1)} - \sum_{1 \leq h_1 < h_2 \leq m} \frac{\lambda_f(p^2)}{p^{s_{h_1} + s_{h_2} }(p+1)} + O_m \left( \frac{1}{p^{4\sigma}} \right) \right).
\end{align*}
Applying \eqref{lambdabound} again, we see that the above Euler product is convergent absolutely for \(\Re(s_j) > 1/4\). This completes the proof of the lemma.
\end{proof}

   We now apply Lemma \ref{Dirichletseries} to establish a result on sums of $\prod_{i=1}^{m}\lambda_f(n_i)$ restricted on the set $n_1\cdots n_m=\square$.
\begin{lemma}
\label{sumoversquare}
  With the notation as above. Let $\beta_i >0$ and define for $m \geq 1$,
\begin{equation*}
 P_f(Y^{\beta_1}, \cdots, Y^{\beta_m}):=\sum_{\substack{n_1,\dots,n_m \geq 1\\ n_1\cdots n_m=\square}} \prod_{i=1}^{m}\lambda_f(n_i)\Phi\Big(\frac{n_i}{Y^{\beta_i}}\Big)\prod_{p|n_1\cdots n_m}(1+1/p)^{-1}.
\end{equation*}
  Then we have for $m \geq 4$,
\begin{align}
\label{Pfest}
 P_f(Y^{\beta_1}, \cdots, Y^{\beta_m}) \asymp Y^{\sum^m_{i=1}\beta_i/2}(\log Y)^{\frac {m(m-3)}{2}}.
\end{align}
\end{lemma}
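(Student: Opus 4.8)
The plan is to evaluate $P_f(Y^{\beta_1},\dots,Y^{\beta_m})$ by Mellin inversion and contour shifting, reading off the main term from the polar structure recorded in Lemma~\ref{Dirichletseries}. Let $\widehat{\Phi}(s)=\int_0^{\infty}\Phi(x)x^{s-1}\,\dif x$, an entire function that decays faster than any polynomial on vertical lines since $\Phi$ is smooth and compactly supported inside $(0,\infty)$. Applying Mellin inversion to each factor $\Phi(n_i/Y^{\beta_i})$ and using \eqref{Gdef}, one obtains for any $c>1$
\begin{align*}
P_f(Y^{\beta_1},\dots,Y^{\beta_m})=\frac{1}{(2\pi i)^m}\int_{(c)}\cdots\int_{(c)}G(s_1,\dots,s_m)\prod_{j=1}^{m}\widehat{\Phi}(s_j)\,Y^{\beta_j s_j}\,\dif s_1\cdots\dif s_m .
\end{align*}
By Lemma~\ref{Dirichletseries} the factors $L(2s_j,\operatorname{sym}^2 f)$ and $L(s_{l_1}+s_{l_2},\operatorname{sym}^2 f)$ are entire, $E$ is holomorphic and bounded for $\Re(s_j)\ge \tfrac14+\delta$, so the only singularities of the integrand in this region are the $\binom m2$ simple poles of $\zeta(s_{l_1}+s_{l_2})$ along $s_{l_1}+s_{l_2}=1$.

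First I would move all $m$ contours simultaneously to the line $\Re(s_j)=\tfrac12+\tfrac1{\log Y}$. Since the variables move in lockstep, $\Re(s_{l_1}+s_{l_2})$ stays strictly above $1$ throughout the homotopy, so no residue is crossed, and the horizontal segments vanish by the decay of $\widehat{\Phi}$. On the new lines $|Y^{\beta_j s_j}|\asymp Y^{\beta_j/2}$, and the problem reduces to estimating
\begin{align*}
\int_{\mathbb{R}^{m}}\ \prod_{1\le l_1<l_2\le m}\Big|\zeta\Big(1+\tfrac{2}{\log Y}+i(t_{l_1}+t_{l_2})\Big)\Big|\ \prod_{j=1}^{m}\frac{\dif t_j}{(1+|t_j|)^{A}} .
\end{align*}
Using $|\zeta(1+\tfrac2{\log Y}+i\tau)|\ll(\tfrac1{\log Y}+|\tau|)^{-1}$ for $|\tau|\le 1$, together with the polynomial decay in the $t_j$ to discard the range $\max_j|t_j|\ge 1$, and then rescaling $t_j=u_j/\log Y$, this integral is $\asymp(\log Y)^{\binom m2-m}=(\log Y)^{m(m-3)/2}$; the one point that genuinely needs $m\ge 4$ is that the rescaled integral $\int_{\mathbb{R}^m}\prod_{l_1<l_2}(1+|u_{l_1}+u_{l_2}|)^{-1}\prod_j\dif u_j$ converges, which fails for $m\le 3$. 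This already yields $P_f\ll Y^{\sum_i\beta_i/2}(\log Y)^{m(m-3)/2}$, and the same argument applied to $G^+$ in place of $G$ controls the absolute analogue (whence the upper bound, since $|P_f|$ does not exceed its $|\lambda_f|$-version).

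For the matching lower bound one must extract the main term rather than merely bound it. Substituting $s_j=\tfrac12+z_j/\log Y$ and using $\zeta(1+w/\log Y)=\log Y/w+O(1)$, $L(1+w/\log Y,\operatorname{sym}^2 f)=L(1,\operatorname{sym}^2 f)+O(1/\log Y)$, $\widehat{\Phi}(\tfrac12+w/\log Y)=\widehat{\Phi}(\tfrac12)+O(1/\log Y)$ and $E(\tfrac12+\cdots)=E(\tfrac12,\dots,\tfrac12)+O(1/\log Y)$ on the truncated range, the integral collapses to
\begin{align*}
P_f(Y^{\beta_1},\dots,Y^{\beta_m})=L(1,\operatorname{sym}^2 f)^{\,m+\binom m2}\,\widehat{\Phi}(\tfrac12)^{m}\,E(\tfrac12,\dots,\tfrac12)\cdot\mathcal{I}_m(\beta_1,\dots,\beta_m)\cdot Y^{\sum_i\beta_i/2}(\log Y)^{\frac{m(m-3)}{2}}\big(1+o(1)\big),
\end{align*}
where $\mathcal{I}_m(\beta_1,\dots,\beta_m)=\frac{1}{(2\pi i)^m}\int\cdots\int\big(\prod_{j}e^{\beta_j z_j}\big)\prod_{l_1<l_2}(z_{l_1}+z_{l_2})^{-1}\,\dif z_1\cdots\dif z_m$ over suitable vertical lines. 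Since $L(1,\operatorname{sym}^2 f)>0$ and $\widehat{\Phi}(\tfrac12)=\int_0^{\infty}\Phi(x)x^{-1/2}\,\dif x>0$, the asymptotic — hence the asserted $\asymp$ — follows once it is shown that $E(\tfrac12,\dots,\tfrac12)$ and the residual integral $\mathcal{I}_m(\beta_1,\dots,\beta_m)$ are nonzero and of the same (positive) sign.

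I expect the main obstacle to be exactly this last positivity statement, together with the bookkeeping of the truncation and rescaling: one must verify $E(\tfrac12,\dots,\tfrac12)\ne0$ directly from the Euler product computed in the proof of Lemma~\ref{Dirichletseries}, and show that the combinatorial integral $\mathcal{I}_m(\beta_1,\dots,\beta_m)$ is a positive constant (by iterated residue calculus, with $m\ge 4$ once more needed for absolute convergence). By contrast the contour shifts themselves are routine, precisely because moving all $m$ variables together keeps the integrand clear of the web of polar hyperplanes $\{s_{l_1}+s_{l_2}=1\}$.
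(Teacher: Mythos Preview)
Your overall architecture is sound but takes a genuinely different route from the paper. You shift all $m$ contours simultaneously to $\Re(s_j)=\tfrac12+\tfrac{1}{\log Y}$ (crossing no poles) and then rescale, so that the exponent $\tfrac{m(m-3)}{2}$ emerges from the homogeneity of the polar part $\prod_{l_1<l_2}(z_{l_1}+z_{l_2})^{-1}$ and the convergence of the model integral $\mathcal{I}_m$. The paper instead shifts variables one at a time to $\Re(u_j)=\tfrac14+\varepsilon$, collecting residues iteratively until everything collapses to the single point $u_j=\tfrac12$, and then---this is the key device you are missing---introduces the nonnegative companion
\[
P_f^+(Y^{\beta_1},\dots,Y^{\beta_m})=\sum_{\substack{n_1,\dots,n_m\\ n_1\cdots n_m=\square}}\ \prod_{i=1}^m|\lambda_f(n_i)|\,\Phi\!\left(\frac{n_i}{Y^{\beta_i}}\right)\prod_{p\mid n_1\cdots n_m}\Bigl(1+\tfrac1p\Bigr)^{-1}
\]
and determines the power of $\log Y$ by applying de la Bret\`eche's Tauberian theorem \cite{Breteche} to $G^+$. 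The point is that the Tauberian machinery needs nonnegative coefficients, which $\lambda_f$ does not have but $|\lambda_f|$ does; and it outputs the exponent $q-\operatorname{rank}(\mathcal{L})=\binom m2-m=\tfrac{m(m-3)}{2}$ as an algebraic computation on the set of linear forms $\{s_{l_1}+s_{l_2}\}$, without any need to evaluate or even bound your $\mathcal{I}_m(\beta_1,\dots,\beta_m)$.

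What this buys the paper is precisely the step you flagged as the obstacle. Proving directly that the combinatorial contour integral $\mathcal{I}_m(\beta_1,\dots,\beta_m)$ is a nonzero (positive) constant is not routine: the integrand is not of one sign, the iterated residues produce many cancelling terms, and the convergence for $m\ge4$ that you assert still needs a careful argument. The Tauberian route circumvents all of this for $P_f^+$. Your approach is more self-contained and would, if completed, yield an explicit leading constant, but as it stands the positivity of $\mathcal{I}_m$ is a genuine gap; the paper's detour through $P_f^+$ and de la Bret\`eche is exactly the missing idea that closes it. (The nonvanishing of $E(\tfrac12,\dots,\tfrac12)$ is a separate issue that neither argument addresses in detail; the paper simply reads off the asserted $\asymp$ for $P_f$ from the parallel asymptotic for $P_f^+$.)
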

\begin{proof}
  We define for $m \geq 1$,
\begin{align*}
 P^+_f(Y^{\beta_1}, \cdots, Y^{\beta_m}):=& \sum_{\substack{n_1,\dots,n_m \geq 1\\ n_1\cdots n_m=\square}}  \prod_{i=1}^{m}|\lambda_f(n_i)|\Phi\Big(\frac{n_i}{Y^{\beta_i}}\Big)\prod_{p|n_1\cdots n_m}(1+1/p)^{-1}.
\end{align*}

  We also denote $\widehat{\Phi}$ the Mellin transform of $\Phi$ and we note that repeated integration by parts implies that for any integer $E\geq 0$ and $\Re(s)\geq 1/2$,
\begin{align}
\label{Phibound}
\widehat{\Phi}(s)\ll (1+|s|)^{-E}.
\end{align}

   We then apply Mellin inversion to the variables $n_1,\dots, n_{m}$ to see that
\begin{align}
\label{Hsq0}
\begin{split}
P_f(Y^{\beta_1}, \cdots, Y^{\beta_m})=& \Big(\frac{1}{2\pi i}\Big)^{m}\int_{(2)}\cdots\int_{(2)}Y^{\sum^m_{i=1}u_i\beta_i}G(u_1,\dots,u_{m})
\widehat{\Phi}(u_1)\cdots\widehat{\Phi}(u_{m})
\dif u_1\cdots \dif u_{m}, \\
P^+_f(Y^{\beta_1}, \cdots, Y^{\beta_m})=& \Big(\frac{1}{2\pi i}\Big)^{m}\int_{(2)}\cdots\int_{(2)}Y^{\sum^m_{i=1}u_i\beta_i}G^+(u_1,\dots,u_{m})
\widehat{\Phi}(u_1)\cdots\widehat{\Phi}(u_{m})
\dif u_1\cdots \dif u_{m},
\end{split}
\end{align}
where $G, G^+$ are defined in \eqref{Gdef}.

  We now evaluate the main term of \eqref{Hsq0} by first moving the integrals there to $\Re(u_j)=1/2+\varepsilon_i$ for $\varepsilon_i>0$ and mutually distinct. In this process, we encounter no pole so that
\begin{align}
 \label{Hsq1}
\begin{split}& P_f(Y^{\beta_1}, \cdots, Y^{\beta_m})\\
=&\Big(\frac{1}{2\pi i}\Big)^{m}\int_{(1/2+\varepsilon_1)}\cdots\int_{(1/2+\varepsilon_m)}Y^{\sum^m_{i=1}u_i\beta_i}
\Bigg( \prod_{1 \leq j \leq m} L(2u_j, \operatorname{sym}^2f ) \prod_{1 \leq l_1 < l_2 \leq m} \zeta(u_{l_1} + u_{l_2})L (u_{l_1} + u_{l_2},\operatorname{sym}^2f)\Bigg) \\
&\times E(u_1, \ldots, u_{m})\widehat{\Phi}(u_1)\cdots\widehat{\Phi}(u_{m})\dif u_1\cdots \dif u_{m}, \\
& P^+_f(Y^{\beta_1}, \cdots, Y^{\beta_m})\\
=&\Big(\frac{1}{2\pi i}\Big)^{m}\int_{(1/2+\varepsilon_1)}\cdots\int_{(1/2+\varepsilon_m)}Y^{\sum^m_{i=1}u_i\beta_i}
\Bigg( \prod_{1 \leq j \leq m} L(2u_j, \operatorname{sym}^2f ) \prod_{1 \leq l_1 < l_2 \leq m} \zeta(u_{l_1} + u_{l_2})L (u_{l_1} + u_{l_2},\operatorname{sym}^2f)\Bigg) \\
&\times E^+(u_1, \ldots, u_{m})\widehat{\Phi}(u_1)\cdots\widehat{\Phi}(u_{m})\dif u_1\cdots \dif u_{m}.
\end{split}
\end{align}
  Here we note that $E(u_1, \ldots, u_{m}), E^+(u_1, \ldots, u_{m})$ converge absolutely in the region of $\Re(u_j)$ larger than $1/4$, and are uniformly bounded there.

  We note the convexity bound for $\zeta(s), L(s,\operatorname{sym}^2f)$ as given in \cite[(5.20)]{iwakow} together with \eqref{SymsquareLfeqn} (which implies that the analytic conductor of $L(s,\operatorname{sym}^2f)$ is $\ll (1+|s|)^3$) to see that for any $\varepsilon>0$,
\begin{align*}
%%\label{zetabound}
   \zeta(s) \ll
   (1+|s|)^{(1-\Re(s))/2+\varepsilon}, \  L(s,\operatorname{sym}^2f) \ll
   (1+|s|)^{3(1-\Re(s))/2+\varepsilon},\qquad & 0< \Re(s) <1.
\end{align*}

    The above estimations together with \eqref{Phibound} now allows us to shift the lines of integrations \eqref{Hsq1} in $u_m$ to $\Re(u_m)=1/4+\varepsilon$,  encountering $m-1$ simple poles for each case at $u_m=1-u_{\ell}$ for $\ell=1,\dots, m-1$ as the $\varepsilon_i$ are mutually distinct. We want to compute the main terms arising from this process and it is easy to see that the integrals on the new line contribute to only error terms. Thus, in both cases, the main term only comes from the sum of the residues of the $m-1$ simple poles, which involves with a multiple integral of $m-1$ variables. We then shift each such multiple integral in $u_{m-1}$ to $\Re(u_{m-1})=1/4+\varepsilon$, encountering $2(m-2)$ simple poles at $u_{m-1}=1-u_{s}, u_s$ for $s=1,\dots, m-2$. Again the main terms now only come from the sum of the corresponding residues, which  involve with a multiple integral of $m-1$ variables. We repeat the above process to see that, upon denoting $I=\{1, \cdots, m\}$,
\begin{align}
 \label{Hsq2}
\begin{split}
 P_f(Y^{\beta_1}, \cdots, Y^{\beta_m}) \sim & \frac{1}{2\pi i}\int_{(1/2+\varepsilon_1)}\sum_{S}Y^{\sum_{i \in S}u_1\beta_i+\sum_{i \in I\backslash S}(1-u_1)\beta_i} \times R \times E(v_1, \ldots, v_{m})\dif u_1, \\
P^+_f(Y^{\beta_1}, \cdots, Y^{\beta_m}) \sim & \frac{1}{2\pi i}\int_{(1/2+\varepsilon_1)}\sum_{S}Y^{\sum_{i \in S}u_1\beta_i+\sum_{i \in I\backslash S}(1-u_1)\beta_i} \times R \times E^+(v_1, \ldots, v_{m})\dif u_1,
\end{split}
\end{align}
  where $S$ ranges over all subsets of $I$, and where $v_i=u_1$ if $i \in S$ and $v_i=1-u_1$ otherwise. Also, $R$ is a product involving with $\zeta(2u_1), \zeta(2-2u_1), L(2u_1, \operatorname{sym}^2f ), L(2-2u_1, \operatorname{sym}^2f )$.

  We then shift the lines of integrations \eqref{Hsq2} in $u_1$ to $\Re(u_1)=1/4+\varepsilon$ to see that the main terms come from the sum of the residues at $u_1=1/2$, so that
\begin{align}
\label{Hsq3}
\begin{split}
 P_f(Y^{\beta_1}, \cdots, Y^{\beta_m}) \sim & CY^{\sum_{i \in I}\beta_i/2} (\log Y)^k  E(1/2, \ldots, 1/2), \\
P^+_f(Y^{\beta_1}, \cdots, Y^{\beta_m}) \sim & CY^{\sum_{i \in I}\beta_i/2} (\log Y)^k  E^+(1/2, \ldots, 1/2),
\end{split}
\end{align}
  where $C$ is some constant depending on $\widehat{\Phi}$ only.

  It thus remains to determine $k$. We shall do so for $P^+_f(Y^{\beta_1}, \cdots, Y^{\beta_m})$ by noting that $|\lambda_f(n)| \geq 0$. We are therefore able to apply the de la Bret\`{e}che Tauberian theorem \cite{Breteche}, following the arguments given in the proof of \cite[Theorem 1.3]{Toma}. Here, we modify the definition of $H({\bf s})$ given in \cite[(4.1)]{Toma} in our case to be
\begin{align*}
%%\label{H(s)}
H({\bf s}):=G^+({\bf s} + {\bf 1/2})\left( \prod_{1 \leq l_1 <l_2 \leq 2k}(s_{l_1}+s_{l_2}) \right).
\end{align*}
  Using the notations given on \cite[Theorem 4.1--4.2]{Toma},  we see that in our case $\mathcal{L} = \{s_{l_1}+s_{l_2} \mid 1 \leq l_1<l_2 \leq m \}$, so that the cardinality $q$ of $\mathcal{L}$ equals $m(m-1)/2$. Moreover, we have $w=0$ in our case as well and the rank of the linear forms in $\mathcal{L}$ is $m$. It thus follows from \cite[(iv) of Th\'{e}or\`{e}me 2]{Breteche} that we have
\begin{align*}
%%\label{H(s)}
 k=\frac {m(m-1)}{2}-m=\frac {m(m-3)}{2}.
\end{align*}
  We substitute the above value of $k$ into \eqref{Hsq3} to see that the desired relation given in \eqref{Pfest} is valid. This completes the proof of the lemma.
\end{proof}

\subsection{Shifted moments of quadratic twists of modular $L$-functions }
  We record here a result from
\cite{G&Zhao24-12} concerning bounds on shifted moments of quadratic twists of modular $L$-functions.
\begin{proposition}\label{prop}
With the notation as above and assuming the truth of GRH. Let $k\geq 1$ be a fixed integer and $A>0$ a fixed constant. Suppose $X$ is a large real number and $t=(t_1,\dots,t_k)$ is a real $k$-tuple with $|t_j|\leq X^A$. Then
\begin{align*}
  \sumstar_{0<d \leq X}\prod_{1\leq j\leq k}\big| L\big(1/2+it_j,f \otimes \chi_d \big) \big|^{a_j}
 \ll  X(\log X)^{\frac{1}{4}\sum_{j=1}^k a_j^2}  \mathcal{G}_1\cdot\mathcal{G}_2,
\end{align*}
where
\begin{align*}
\mathcal{G}_1=\prod_{1\leq i<j\leq k} g_1(|t_i-t_j|)^{a_ia_j/2}g_1(|t_i+t_j|)^{a_ia_j/2}\prod_{1\leq i\leq k} g_1(|2t_i|)^{a^2_i/4-a_i/2},
\end{align*}
\begin{align*}
\mathcal{G}_2=\prod_{1\leq i<j\leq k} g_2(|t_i-t_j|)^{a_ia_j/2}g_2(|t_i+t_j|)^{a_ia_j/2}\prod_{1\leq i\leq k} g_2(|2t_i|)^{a^2_i/4+a_i/2},
\end{align*}
and the functions $g_1, g_2 : \mathbb{R}_{\geq 0} \rightarrow \mathbb{R}$ are defined by
\begin{align}
\label{g1Def}
\begin{split}
g_1(x) =\begin{cases}
\log X,  & \text{if } x\leq 1/\log X \text{ or } x \geq e^X, \\
1/x, & \text{if }   1/\log X \leq x\leq 10, \\
\log \log x, & \text{if }  10 \leq x \leq e^{X},
\end{cases},
\quad
g_2(x) =
\begin{cases}
1,  & \text{if } x \leq e^e, \\
\log \log x, & \text{if }   e^e \leq x \leq e^{X}, \\
 \log X, & \text{if }  x \geq e^{X}.
\end{cases}
\end{split}
\end{align}

 Here the implied constant depends on $k$, $A$ and the $a_j$'s, but not on $X$ or the $t_j$'s.
\end{proposition}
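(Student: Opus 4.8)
The plan is to prove this as a conditional (GRH) upper bound in the style of Soundararajan's work on moments of $L$-functions, in the refinement due to Harper, specialised to the symplectic family $\{L(s,f\otimes\chi_d)\}$; I only indicate the steps. \emph{Step 1.} For each $d$ in the sum, $L(s,f\otimes\chi_d)$ is an entire degree-$2$ $L$-function of analytic conductor $\ll|d|(1+|t|)^2\ll X^{2A+1}$ at height $t$. Granting GRH for it, the standard explicit-formula estimate (valid for general $L$-functions, as in Soundararajan's proof that $\log|\zeta(\tfrac12+it)|$ is dominated by a prime sum) gives, for $2\le x\le X$ and $\lambda>0$,
\[
\log\bigl|L(\tfrac12+it_j,f\otimes\chi_d)\bigr|\ \le\ \Re\sum_{p\le x}\frac{\lambda_f(p)\chi_d(p)}{p^{1/2+\lambda/\log x+it_j}}\cdot\frac{\log(x/p)}{\log x}\ +\ \tfrac12 R_{t_j}\ +\ O\!\Bigl(\frac{\log X}{\log x}\Bigr),
\]
the prime powers $p^\nu$, $\nu\ge3$, contributing $O(1)$ by $|\alpha_p|=|\beta_p|=1$, and the prime-square term
\[
R_u\ :=\ \Re\sum_{p\le\sqrt x}\frac{\lambda_f(p^2)-1}{p^{1+2\lambda/\log x+2iu}}\cdot\frac{\log(x/p^2)}{\log x}
\]
being independent of $d$ up to $O(1)$ since $\chi_d(p)^2=1$ for $p\nmid 2d$. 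Using $\lambda_f(p)^2=1+\lambda_f(p^2)$ with $\sum_p\lambda_f(p^2)p^{-s}=\log L(s,\operatorname{sym}^2 f)+O(1)$ and $\sum_p p^{-s}=\log\zeta(s)+O(1)$ for $\Re s\ge1$ (here one uses the holomorphy of $L(s,\operatorname{sym}^2 f)$ recalled above), $R_u$ is, up to $O(1)$ and the effect of the smoothing at scale $1/\log x$, equal to $\log|L(1+2iu,\operatorname{sym}^2 f)|-\log|\zeta(1+2iu)|$.

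\emph{Step 2.} Fixing $x$ appropriately, exponentiating, and multiplying over $j$ with weights $a_j$ pulls out the $d$-independent contribution of the $R_{t_j}$ and reduces matters to bounding
\[
\sumstar_{0<d\le X}\exp\!\Bigl(\sum_{j=1}^k a_j\,\Re P_d(t_j)\Bigr),\qquad P_d(t):=\sum_{p\le x}\frac{\lambda_f(p)\chi_d(p)}{p^{1/2+\lambda/\log x}}\cdot\frac{\log(x/p)}{\log x}\,p^{-it}.
\]
Here I would run Harper's device: instead of a single cutoff, split the primes into the blocks $x^{e^{-r-1}}<p\le x^{e^{-r}}$ for $0\le r\lesssim\log\log X$, use large-deviation bounds for the block sums (obtained from the mean-value computation of Step 3) to show that the $d$ for which some block sum is abnormally large contribute negligibly, and on the remaining $d$ replace each $\exp$ by a short Taylor polynomial. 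This is the device that removes any superfluous power of $\log\log X$.

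\emph{Step 3.} After expanding the Taylor polynomials one is left with sums $\sumstar_{0<d\le X}\chi_d(n)$, $n$ built from primes $\le x$; by Lemma~\ref{character} only $n=\square$ contributes a main term, which forces the primes to pair off. A pair of equal primes $p$ shared between the $i$-th and $j$-th copies contributes
\[
a_ia_j\,\frac{\lambda_f(p)^2}{p}\cdot\frac{\cos((t_i-t_j)\log p)+\cos((t_i+t_j)\log p)}{2}\,\Bigl(\frac{\log(x/p)}{\log x}\Bigr)^2(1+1/p)^{-1},
\]
and summing over $p\le x$ and re-exponentiating gives, from the diagonal $i=j$ at zero shift, the main term $(\log X)^{\frac14\sum_j a_j^2}$ (using $\sum_{p\le x}\lambda_f(p)^2/p=\log\log X+O(1)$ from Rankin--Selberg) and, from the rest, factors $\exp\bigl(c\,\Re\sum_{p\le x}\tfrac{\lambda_f(p)^2}{p}p^{-iu}w(p)^2\bigr)$ at the shifts $u\in\{|t_i-t_j|,|t_i+t_j|,|2t_i|\}$. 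Splitting $\lambda_f(p)^2=1+\lambda_f(p^2)$ once more, the ``$1$''-part of each such sum is a smoothed value of $\log\zeta(1+iu)$ and the ``$\lambda_f(p^2)$''-part a smoothed value of $\log L(1+iu,\operatorname{sym}^2 f)$; together with the corresponding parts of the $R_{t_i}$ from Step 1 these produce, on bounding under GRH, the functions $g_1$ (the $\zeta$-factors, hence the exponents $a_ia_j/2$ and $a_i^2/4-a_i/2$ — the $-a_i/2$ from the $\zeta$-part of $R_{t_i}$) and $g_2$ (the $\operatorname{sym}^2$-factors, hence $a_ia_j/2$ and $a_i^2/4+a_i/2$). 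The GRH input here is: $|\zeta(1+iu)|,|L(1+iu,\operatorname{sym}^2 f)|\ll\log\log|u|$ for large $|u|$; the truncated pole of $\zeta$ at $u=0$, of size $\ll\log X$, against the boundedness of $L(s,\operatorname{sym}^2 f)$ near $u=0$; and the elementary bounds $\ll1/|u|$, $\ll\log\log|u|$ in the intermediate ranges — always compared against the full product $\mathcal G_1\mathcal G_2$.

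\emph{Main obstacle.} The substance is to run Steps 2--3 \emph{uniformly in the shifts $|t_j|\le X^A$ with no loss in the power of $\log X$}: Harper's multi-scale bookkeeping must be arranged so that the cumulative error from discarded $d$ and from Taylor truncation stays below the main term, and each of the $\zeta$- and $\operatorname{sym}^2$-prime sums at the shifts $|t_i\pm t_j|$, $|2t_i|$ must be tracked through all three regimes — below $1/\log X$, between $1/\log X$ and $O(1)$, and above — with close attention both to the truncation of the prime sums and to the signs of the exponents $a_ia_j/2,\ a_i^2/4\pm a_i/2$ (the range $0<a_i<2$, where $a_i^2/4-a_i/2<0$, being the most delicate), so that what emerges is exactly $(\log X)^{\frac14\sum_j a_j^2}\,\mathcal G_1\,\mathcal G_2$.
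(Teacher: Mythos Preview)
The paper does not prove this proposition at all: it is simply recorded, with the sentence ``We record here a result from \cite{G&Zhao24-12} concerning bounds on shifted moments of quadratic twists of modular $L$-functions,'' and then used as a black box in Section~3. So there is no proof in the paper to compare your proposal against.

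That said, your sketch is the correct approach and is presumably what is carried out in the cited reference: the Soundararajan explicit-formula upper bound for $\log|L(\tfrac12+it,f\otimes\chi_d)|$ under GRH, the observation that the prime-square piece is essentially $d$-independent (since $\chi_d(p)^2=1$ off $p\mid 2d$) and splits via $\lambda_f(p)^2=1+\lambda_f(p^2)$ into a $\zeta$-part and a $\operatorname{sym}^2 f$-part, Harper's multi-scale refinement to avoid the $\varepsilon$-loss, and the pairing argument after expanding the short Taylor polynomials. Your identification of $g_1$ with the $\zeta$-contribution and $g_2$ with the $\operatorname{sym}^2 f$-contribution, and your accounting for the exponents $a_i^2/4\mp a_i/2$ via the sign of the $\zeta$- versus $\operatorname{sym}^2$-part of $R_{t_i}$, are both right. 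What you flag as the ``main obstacle'' --- uniformity in $|t_j|\le X^A$ through all three regimes of the shifts, and the bookkeeping when $a_i^2/4-a_i/2<0$ --- is exactly where the work lies, but for the purposes of this paper a citation is all that is needed.
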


\section{Proof of Theorem \ref{upper}}

   By a straightforward modification of the proof of \cite[Theorem 1.5]{G&Zhao24-12} together with the estimation given in \eqref{Phibound}, we see that under GRH,
\begin{align*}
%%\label{SmXYf14}
 T_m (X,Y;f,\Phi) \ll &
   Y^{m/2}\sumstar_{0<d\leq X}\Bigg| \int\limits_{ |t|\leq X^{2\varepsilon}}\big|L( 1/2+it, f \otimes\chi_d)\big|\frac{1}{(1+|t|)^{10}} \dif t\Bigg|^{m}+O(XY^{m/2}).
\end{align*} 	

   It thus remains to show that under GRH, we have for any integer $m \geq 4$,
\begin{equation*}
 \sumstar_{0<d \leq X }
   \Bigg| \int\limits_{ |t|\leq X^{2\varepsilon}}\big|L( 1/2+it, f \otimes\chi_d)\big|\frac{1}{(1+|t|)^{10}} \dif t\Bigg|^{m} \ll X(\log X)^{\frac{m(m-3)}{2}}.
\end{equation*}

   Again similar to the proof of \cite[Lemma 5.1]{G&Zhao24-12}, we see that in order to establish the above estimation, it suffices to prove a refinement of  \cite[Proposition 5.4]{G&Zhao24-12} concerning the integral
\begin{equation*}
    T_m(B,X):=\sumstar_{0<d \leq X }\bigg( \int_{0}^{B} |L(1/2+it,f\otimes\chi_d) | \dif t \bigg)^{m},
\end{equation*}
where $10 \leq B=X^{O(1)}$.

   More precisely, we need to show that under GRH that for any integer $m \geq 4$,
\begin{align}
\label{Tbound}
    T_m(B,X) \ll B^2(\log \log B)^{O_m(1)}X(\log X)^{\frac{m(m-3)}{2}},
\end{align}

   In what follows, we shall establish \eqref{Tbound}. Our approach is motivated by the proof of \cite[Proposition 20]{Mun}.
We note first that we have
\begin{align*}
%% \label{lemmaint}
 &T_m(B,X) \\
 =& \sumstar_{0<d \leq X }\bigg( \int_{0}^{1/(2\log X)}|L(1/2+it,f\otimes\chi_d) |\dif t+ \int_{1/(2\log X)}^{5}|L(1/2+it,f\otimes\chi_d) |\dif t+ \int_{5}^{B}|L(1/2+it,f\otimes\chi_d) |\dif t \bigg)^{m} \\
 \ll &  \sum^3_{k=1}T_{m,k}(B,X),
\end{align*}
  where
\begin{align*}
%% \label{lemmaint}
 T_{m,1}(B,X)=& \sumstar_{0<d \leq X }\bigg( \int_{0}^{1/(2\log X)}|L(1/2+it,f\otimes\chi_d) |\dif t\bigg)^{m}, \\
 T_{m,2}(B,X)=& \sumstar_{0<d \leq X }\bigg( \int_{1/(2\log X)}^{5}|L(1/2+it,f\otimes\chi_d) |\dif t\bigg)^{m}, \\
 T_{m,3}(B,X)=& \sumstar_{0<d \leq X }\bigg(\int_{5}^{B}|L(1/2+it,f\otimes\chi_d) |\dif t \bigg)^{m}.
\end{align*}
 We have by symmetry that for $1 \leq k \leq 3$,
\begin{equation*}
%%\label{ind_bound}
 T_{m,k}(B,X)
      \ll  \sumstar_{0<d \leq X}\int\limits_{\mathcal{A}_{B,k}}\prod_{a=1}^m|L(1/2+ it_a, f\otimes\chi_d)| \dif \mathbf{t},
\end{equation*}
where $\mathcal{A}_{B,k}=\{ (t_1,\dots,t_m) \in I_k^m: 0 \leq t_1 \leq t_2 \dots \leq t_m\}$. Here we define $I_1=[0,1/(2\log X)], I_2=[1/(2\log X),5], I_3=[5, B]$.

Summing over $d$ and applying Proposition \ref{prop}, we see that for $1 \leq k \leq 3$,
\begin{align}
\label{TBXbound}
\begin{split}
 T_{m,k}(B,X)\ll& X(\log X)^{m/4} \int\limits_{\mathcal{A}_{B,k}} \prod_{1\leq i<j\leq m} g_1(|t_i-t_j|)^{1/2}g_1(|t_i+t_j|)^{1/2}g_2(|t_i-t_j|)^{1/2}g_2(|t_i+t_j|)^{1/2}\\
 &\qquad\qquad\qquad\qquad\times \prod_{1\leq i\leq m} g_1(|2t_i|)^{-1/4}g_2(|2t_i|)^{3/4}  \dif \mathbf{t}  \\
\ll & X(\log X)^{m/4}(\log \log B)^{O_m(1)} \int\limits_{\mathcal{A}_{B,k}} \prod_{1\leq i<j\leq m} g_1(|t_i-t_j|)^{1/2}g_1(|t_i+t_j|)^{1/2} \prod_{1\leq i\leq m} g_1(|2t_i|)^{-1/4}  \dif \mathbf{t} \\
:= & X(\log \log B)^{O_m(1)}I_{m,B,k},
\end{split}
\end{align}
  where the second estimation above follows by bounding $g_2$ trivially by $\log \log B$ using \eqref{g1Def}, and where
\begin{align*}
%%\label{defI}
I_{m,B,k} = &(\log X)^{m/4}  \int\limits_{\mathcal{A}_{B,k}} \prod_{1\leq i<j\leq m} g_1(|t_i-t_j|)^{1/2}g_1(|t_i+t_j|)^{1/2} \prod_{1\leq i\leq m} g_1(|2t_i|)^{-1/4}\dif \mathbf{t}.
\end{align*}

  We now proceed to show that for all integers $m\geq 4$ and $1 \leq k \leq 3$,
\begin{equation}
\label{ind_hyp}
I_{m,B,k} \ll B^2 (\log X)^{\frac{m(m-3)}{2}}(\log \log B)^{O_m(1)}.
\end{equation}
  Without loss of generality, we may assume that $0 \leq t_1 \leq t_2 \leq \dots \leq t_m$ in what follows.

  If $t_i \in I_{B,1}$,  we have by \eqref{g1Def} that $g_1(|t_i \pm t_j|)=g_1(|2t_i|)=\log X$ for $1\leq i<j\leq m$. It allows us to see that
\begin{align}
\label{T1bound}
I_{m,B,1} \ll & (\log X)^{m(m-1)/2} \int\limits_{\mathcal{A}_{B,1}}\dif \mathbf{t} \ll  (\log X)^{m(m-1)/2-m} =(\log X)^{\frac{m(m-3)}{2}}.
\end{align}

  If $t_i \in I_{B,2}$,  we note from \eqref{g1Def} and the fact that the $t_i$'s are increasing that
$$ g_1(|t_i +t_j|)= (t_i+t_j)^{-1}\ll t_j^{-1}, \quad g_1(|2t_i|) = (2t_i)^{-1}.$$
  We deduce from the above that
\begin{align}
\label{Int4B}
\begin{split}
  I_{m,B,2} \ll & (\log X)^{m/4} \int_{1/(2\log X)}^{5}\int_{t_1}^{5}\int_{t_2}^{5}\cdots \int_{t_{m-1}}^{5}  (t_1\cdots t_m)^{1/4}\prod_{1\leq i<j\leq m}\frac{1}{(t_i+t_j)^{1/2}}\prod_{1\leq i<j\leq m} g_1(|t_i-t_j|)^{1/2}\dif t_m \cdots \dif t_1\\
  \ll & (\log X)^{m/4} \int_{1/(2\log X)}^{5}\int_{t_1}^{5}\int_{t_2}^{5}\cdots \int_{t_{m-1}}^{5}(t_1\cdots t_m)^{1/4}\prod_{1\leq i<j\leq m}t_j^{-1/2}\prod_{1\leq i<j\leq m} g_1(|t_i-t_j|)^{1/2}\dif t_m \cdots \dif t_1 \\
  = & (\log X)^{m/4} \int_{1/(2\log X)}^{5}\int_{t_1}^{5}\int_{t_2}^{5}\cdots \int_{t_{m-1}}^{5}(t_1\cdots t_m)^{1/4}\prod_{2\leq j\leq m}t_j^{-(j-1)/2}\prod_{1\leq i<j\leq m} g_1(|t_i-t_j|)^{1/2}\dif t_m \cdots \dif t_1 \\
  = & (\log X)^{m/4} \int_{1/(2\log X)}^{5}\int_{t_1}^{5}\int_{t_2}^{5}\cdots \int_{t_{m-1}}^{5}t_1^{1/4}\prod_{2\leq j\leq m}t_j^{-(j-1)/2+1/4}\prod_{1\leq i<j\leq m} g_1(|t_i-t_j|)^{1/2}\dif t_m \cdots \dif t_1 \\
  \ll & (\log X)^{m/4} \int_{1/(2\log X)}^{5}\int_{t_1}^{5}\int_{t_2}^{5}\cdots \int_{t_{m-1}}^{5}t_1^{1/4}\prod_{2\leq j\leq m}t_2^{-(j-1)/2+1/4}\prod_{1\leq i<j\leq m} g_1(|t_i-t_j|)^{1/2}\dif t_m \cdots \dif t_1 \\
   = & (\log X)^{m/4} \int_{1/(2\log X)}^{5}\int_{t_1}^{5}\int_{t_2}^{5}\cdots \int_{t_{m-1}}^{5}t_1^{1/4}t_2^{-(m-1)^2/4}\prod_{1\leq i<j\leq m} g_1(|t_i-t_j|)^{1/2}\dif t_m \cdots \dif t_1.
\end{split}
\end{align}

   We now evaluate the integral
\begin{align}
\label{Int4}
\begin{split}
  & \int_{t_{m-1}}^{5}  \prod_{1\leq i< m} g_1(|t_m-t_i|)^{1/2}\dif t_m \\
  =& \int_{t_{m-1}}^{t_{m-1}+1/\log X} \prod_{1\leq i< m} g_1(|t_m-t_i|)^{1/2}\dif t_m+\int_{t_{m-1}+1/\log X}^{5}  \prod_{1\leq i< m} g_1(|t_m-t_i|)^{1/2}\dif t_m.
\end{split}
\end{align}
  For the first integral on the right-hand side above, we apply the bounds $g_1(|t_m-t_i|) \leq \log X$ for $1\leq i<m$ from \eqref{g1Def} to see that
\begin{align}
\label{Int41}
\begin{split}
  \int_{t_{m-1}}^{t_{m-1}+1/\log X} \prod_{1\leq i< m} g_1(|t_m-t_i|)^{1/2}\dif t_m \ll (\log X)^{(m-1)/2} \int_{t_{m-1}}^{t_{m-1}+1/\log X}  1\dif t_m
  \ll (\log X)^{(m-1)/2-1}.
\end{split}
\end{align}

   When $t_m \geq t_{m-1}+1/\log X$, we apply the relation $g_1(|t_m-t_{m-1}|) \leq (t_m-t_{m-1})^{-1}$. As $t_{m-1} \geq t_i$ for all $1 \leq i < m-1$, it follows that we have $g_1(|t_m-t_i|) = (t_m-t_i)^{-1} \leq (t_m-t_{m-1})^{-1}$ for all $1 \leq i <m$. We then deduce that
\begin{align}
\label{Int42}
\begin{split}
  & \int_{t_{m-1}+1/\log X}^{5}  \prod_{1\leq i< m} g_1(|t_m-t_i|)^{1/2}\dif t_m
  \ll  \int_{t_{m-1}+1/\log X}^{5}  (t_m-t_{m-1})^{-(m-1)/2}\dif t_m \ll  (\log X)^{(m-1)/2-1}.
\end{split}
\end{align}

   Substituting the estimations obtained in \eqref{Int41} and \eqref{Int42} into \eqref{Int4}, we see that
\begin{align*}
%%\label{Int4est}
\begin{split}
  & \int_{t_{m-1}}^{5}  \prod_{1\leq i< m} g_1(|t_m-t_i|)^{1/2}\dif t_m \ll (\log X)^{(m-1)/2-1}.
\end{split}
\end{align*}

   It follows from this and \eqref{Int4B} that
\begin{align}
\label{Int4B3}
\begin{split}
  I_{m,B,2} \ll & (\log X)^{m/4}(\log X)^{(m-1)/2-1}\int_{1/(2\log X)}^{5}\int_{t_1}^{5}\int_{t_2}^{5}\cdots \int_{t_{m-2}}^{5}t_1^{1/4}t_2^{-(m-1)^2/4}\prod_{1\leq i<j\leq m-1} g_1(|t_i-t_j|)^{1/2}\dif t_{m-1} \cdots \dif t_1.
\end{split}
\end{align}

   We now argue similar to those done in  \eqref{Int4}--\eqref{Int42} to see that
\begin{align*}
%%\label{Int3est}
\begin{split}
   \int_{t_3}^{5}\cdots \int_{t_{m-2}}^{5}\prod_{1\leq i<j\leq m-1} g_1(|t_i-t_j|)^{1/2}\dif t_{m-1} \cdots \dif t_4  \ll &  (\log X)^{\sum^{m-3}_{i=1}(\frac {m-i}2-1)}, \\
   \int_{t_2}^{5}  \prod_{1\leq i< 3} g_1(|t_3-t_i|)^{1/2}\dif t_3 \ll & \log \log X \ll  \log \log B.
\end{split}
\end{align*}

   We deduce from the above and \eqref{Int4B3} that
\begin{align}
\label{Int4B2}
\begin{split}
  I_{m,B,2} \ll & (\log X)^{m/4}(\log X)^{\sum^{m-2}_{i=1}(\frac {m-i}2-1)}\log \log B\int_{1/(2\log X)}^{5}\int_{t_1}^{5}  t_1^{1/4}t_2^{-(m-1)^2/4} g_1(|t_2-t_1|)^{1/2}\dif t_2 \dif t_1 \\
  \ll &  (\log X)^{m/4+m(m-1)/4-(m-1)+1/2}\log \log B\int_{1/(2\log X)}^{5}\int_{t_1}^{5}  t_1^{1/4}t_2^{-(m-1)^2/4} g_1(|t_2-t_1|)^{1/2}\dif t_2 \dif t_1
\end{split}
\end{align}

   We argue again similar to those done in  \eqref{Int4}--\eqref{Int42} to see that
\begin{align}
\label{Int2est}
\begin{split}
 \int_{t_1}^{5} t_2^{-(m-1)^2/4} g_1(|t_2-t_1|)^{1/2}\dif t_2 \ll  &
  \int_{t_1}^{t_{1}+1/\log X}  t_1^{-(m-1)^2/4} (\log X)^{1/2}dt_2+\int_{t_{1}+1/\log X}^5  t_2^{-(m-1)^2/4} (t_2-t_1)^{-1/2}dt_2 \\
  \ll  &  t_1^{-(m-1)^2/4} (\log X)^{-1/2}+\int_{1/\log X}^{5}  (u+t_1)^{-(m-1)^2/4}u^{-1/2}du.
\end{split}
\end{align}

   We estimate the last integral above using the estimations $(u+t_1)^{-(m-1)^2/4} \ll t_1^{-(m-1)^2/4}$ when $u \leq t_1$ and $(u+t_1)^{-(m-1)^2/4} \ll u^{-(m-1)^2/4}$ to see that for $m \geq 3$,
\begin{align*}
%%\label{Int2est}
\begin{split}
 \int_{1/\log X}^{5}  (u+t_1)^{-(m-1)^2/4}u^{-1/2}du \ll \int_{1/\log X}^{t_1}t_1^{-(m-1)^2/4}u^{-1/2}du+\int_{t_1}^{5}u^{-(m-1)^2/4-1/2}du \ll t_1^{-(m-1)^2/4+1/2} .
\end{split}
\end{align*}

   We apply the above estimation in \eqref{Int2est} to see that
\begin{align*}
%%\label{Int2estsimplified}
\begin{split}
 \int_{t_1}^{5} t_2^{-(m-1)^2/4} g_1(|t_2-t_1|)^{1/2}\dif t_2 \ll &  t_1^{-(m-1)^2/4} (\log X)^{-1/2}+t_1^{-(m-1)^2/4+1/2}.
\end{split}
\end{align*}

   We deduce from the above and \eqref{Int4B2} that for $m \geq 4$,
\begin{align}
\label{Int4B1}
\begin{split}
  I_{m,B,2} \ll & (\log X)^{m/4+m(m-1)/4-(m-1)+1/2}\log \log B\int_{1/(2\log X)}^{5}t_1^{-(m-1)^2/4+1/4} (\log X)^{-1/2}+t_1^{-(m-1)^2/4+1/4+1/2})\dif t_1 \\
  \ll & (\log X)^{m/4+m(m-1)/4-(m-1)+1/2}(\log X)^{(m-1)^2/4-1/4-1-1/2}\log \log B \\
  =& (\log X)^{m(m-1)/2-(m-1)-1}\log \log B \\
  =& (\log X)^{\frac {m(m-3)}{2}}\log \log B.
\end{split}
\end{align}

   If $t_i \in I_{B,3}$,  we use the bounds $g_1(|t_i+t_j|)\ll (\log \log B)$ for $1\leq i<j\leq m$.  Note also that we have $g_1(|2t_i|) \gg \log \log 10$ in this case, so that
\begin{align}
\label{IB3bound}
I_{m,B,3} \ll & (\log X)^{m/4} (\log \log B)^{O_m(1)}\int\limits_{\mathcal{A}_{B,3}}\prod_{1\leq i<j\leq m} g_1(|t_i-t_j|)^{1/2}\dif \mathbf{t}.
\end{align}

    We argue as the case $t_i \in I_{B,2}$ to see that
\begin{align}
\begin{split}
\label{I3}
 \int\limits_{\mathcal{A}_{B,3}}\prod_{1\leq i<j\leq m} g_1(|t_i-t_j|)^{1/2}\dif \mathbf{t} \ll & (\log X)^{\sum^{m-2}_{i=1}(\frac {m-i}2-1)}\int_{5}^{B}\int_{t_1}^{B}g_1(|t_2-t_1|)^{1/2} \dif t_2\dif t_1 \\
 =& (\log X)^{m(m-1)/4-(m-1)+1/2}\int_{5}^{B}\int_{t_1}^{B}g_1(|t_2-t_1|)^{1/2} \dif t_2\dif t_1.
\end{split}
\end{align}

   Similar to our arguments given for the case $t_i \in I_{B,2}$, we have that
\begin{align*}
%%\label{Int2est}
\begin{split}
  & \int_{t_1}^{B} g_1(|t_2-t_1|)^{1/2}\dif t_2 \ll  B.
\end{split}
\end{align*}
  It follows from this and \eqref{I3} that
\begin{align*}
 \int\limits_{\mathcal{A}_{B,3}}\prod_{1\leq i<j\leq m} g_1(|t_i-t_j|)^{1/2}\dif \mathbf{t} \ll (\log X)^{m(m-1)/4-(m-1)+1/2}B^{2},
\end{align*}
   We apply the above estimation in \eqref{IB3bound} to see that when $m \geq 4$,
\begin{align}
\label{I3bound}
\begin{split}
I_{m,B,3} \ll & (\log X)^{m/4+m(m-1)/4-(m-1)+1/2} B^{2}(\log \log B)^{O_m(1)} \\
           =& (\log X)^{m^2/4-m+3/2} B^{2}(\log \log B)^{O_m(1)} \\
           \ll & (\log X)^{m(m-3)/2} B^{2}(\log \log B)^{O_m(1)}.
\end{split}
\end{align}

   The desired estimation in \eqref{Tbound} now follows from \eqref{TBXbound}--\eqref{T1bound}, \eqref{Int4B1} and \eqref{I3bound}. This completes
   the proof of Theorem \ref{upper}.

\section{Proof of Theorem \ref{lower}}

   We denote for any $Y\geq 1$ and $0<d\leq X$,
$$H_{d}(Y;f,\Phi):= \sum_{n \geq 1} \chi_{d}(n)\lambda_f(n)\Phi\Big(\frac{n}{Y}\Big).$$

   We further define
\begin{equation*}
%%\label{eq:S1S2}
\mathcal{H}_1= \sumstar_{0<d\leqslant X} H_{d}(Y;f,\Phi) (H_{d}(Y^{\varepsilon};f,\Phi))^{m-1}, \quad \quad
\mathcal{H}_2=\sumstar_{0<d\leqslant X}\vert H_{d}(Y^{\varepsilon};f,\Phi) \vert^m.
\end{equation*}
By H\"older inequality, we get
\begin{align}
\label{Holder}
 \mathcal{H}_1^{m} \leq
 \mathcal{H}_2^{m-1} T_m(X,Y;f,\Phi).
 \end{align}

 We evaluate $ \mathcal{H}_2$ by noting that $\lambda_f(n) \in \mr$ and that $m$ is even, so that by Lemma \ref{character},
  \begin{align}
\label{eqS2}
\begin{split}
 \mathcal{H}_2 = &\sumstar_{0<d\leqslant X}( H_{d}(Y^{\varepsilon};f,\Phi) )^m \\
=& \sumstar_{0<d\leqslant X} \,\,\sum_{n_1,\dots,n_m \geq 1} \chi_{d}(n_1\cdots n_m)\lambda_f(n_1)\cdots \lambda_f(n_m)\Phi\Big(\frac{n_1}{Y^{\varepsilon}}\Big)\cdots \Phi\Big(\frac{n_m}{Y^{\varepsilon}}\Big) \\
  = & \frac 6{\pi^2}X\sum_{\substack{n_1,\dots,n_m \geq 1\\ n_1\cdots n_m=\square}} \lambda_f(n_1)\cdots \lambda_f(n_m)\Phi\Big(\frac{n_1}{Y^{\varepsilon}}\Big)\cdots \Phi\Big(\frac{n_m}{Y^{\varepsilon}}\Big)\prod_{p|n_1\cdots n_m}(1+1/p)^{-1}\\
& +O(X^{1/2}\sum_{n_1,\dots,n_m \ll Y^{\varepsilon}}|\lambda_f(n_1)\cdots \lambda_f(n_m)|(n_1\dots n_m)^{1/4+\varepsilon}) \\
= & \frac 6{\pi^2}X\sum_{\substack{n_1,\dots,n_m \geq 1\\ n_1\cdots n_m=\square}} \lambda_f(n_1)\cdots \lambda_f(n_m)\Phi\Big(\frac{n_1}{Y^{\varepsilon}}\Big)\cdots \Phi\Big(\frac{n_m}{Y^{\varepsilon}}\Big)\prod_{p|n_1\cdots n_m}(1+1/p)^{-1}+O(X^{1/2}\sum_{n_1,\dots,n_m \ll Y^{\varepsilon}}(n_1\dots n_m)^{1/4+\varepsilon}),
\end{split}
\end{align}
where the last estimation above follows from \eqref{divisorbound}.

  Note that
\begin{align}\label{S21}
  X^{1/2} \sum_{n_1,\dots,n_m \ll Y^{\varepsilon}}(n_1\dots n_m)^{1/4+\varepsilon} \ll X^{1/2} Y^{(\frac{5}{4}+\varepsilon)m \varepsilon}=X^{1/2+\varepsilon}.
\end{align}
  Moreover, we have by Lemma \ref{sumoversquare} that for integers $m \geq 4$,
\begin{align}\label{S22}
 \sum_{\substack{n_1,\dots,n_m \geq 1\\ n_1\cdots n_m=\square}} \lambda_f(n_1)\cdots \lambda_f(n_m)\Phi\Big(\frac{n_1}{Y^{\varepsilon}}\Big)\cdots \Phi\Big(\frac{n_m}{Y^{\varepsilon}}\Big)\prod_{p|n_1\cdots n_m}(1+1/p)^{-1}
& \ll Y^{m\varepsilon/2}(\log Y)^{\frac{m(m-3)}{2}} .
\end{align}
  We deduce from $\eqref{eqS2}$--$\eqref{S22}$ that we have
\begin{align}
\label{H2bound}
\mathcal{H}_2 \ll_{\varepsilon, m} X Y^{m\varepsilon/2}(\log X)^{\frac{m(m-3)}{2}}.
\end{align}

  Next we note that by Lemma \ref{character} and arguing similar to our discussions concerning $\mathcal{H}_2$, we have
\begin{align*}
\mathcal{H}_1=&\sumstar_{0<d\leq X} \sum_{n\geq  1} \sum_{\substack{n_1,\ldots,n_{m-1}\geq 1 \\ nn_1\dots n_{m-1}=\square}} \chi_d\Bigg(n\prod_{i=1}^{m-1}n_i\Bigg) \lambda_f(n)\Phi\Big(\frac{n}{Y}\Big)\prod_{i=1}^{m-1}\lambda_f(n_i)\Phi\Big(\frac{n_i}{Y^{\varepsilon}}\Big) \\
= & \frac 6{\pi^2}X\sum_{\substack{n,n_1,\dots,n_{m-1} \geq 1\\ nn_1\dots n_{m-1}=\square}}\lambda_f(n)\lambda_f(n_1)\cdots \lambda_f(n_{m-1})\Phi\Big(\frac{n}{Y}\Big)\Phi\Big(\frac{n_1}{Y^{\varepsilon}}\Big)\cdots \Phi\Big(\frac{n_{m-1}}{Y^{\varepsilon}}\Big)\prod_{p|nn_1\cdots n_{m-1}}(1+1/p)^{-1}\\
& +
\begin{cases}
 O(X^{1/2}\displaystyle \sum_{\substack{n \ll Y \\ n_1,\dots,n_{m-1} \ll Y^{\varepsilon}}}|\lambda_f(n)\lambda_f(n_1)\cdots \lambda_f(n_{m-1})|(nn_1\dots n_{m-1})^{1/4+\varepsilon}), \quad
\text{unconditionally} \\
O(X^{1/2+\varepsilon}\displaystyle\sum_{\substack{n \ll Y \\ n_1,\dots,n_{m-1} \ll Y^{\varepsilon}}}|\lambda_f(n)\lambda_f(n_1)\cdots \lambda_f(n_{m-1})|(nn_1\dots n_m)^{\varepsilon}), \quad
\text{under GRH}
\end{cases} \\
\gg & XY^{1/2+(m-1)\varepsilon/2}(\log Y)^{\frac{m(m-3)}{2}} +
\begin{cases}
 O(X^{1/2+\varepsilon}Y^{5/4+\varepsilon}), \quad
\text{unconditionally} \\
O(X^{1/2+\varepsilon}Y^{1+\varepsilon}), \quad
\text{under GRH}
\end{cases}.
\end{align*}

  It follows that we have unconditionally that for $X^{\varepsilon} \ll Y\ll X^{2/3-\alpha}$ for any $\alpha>0$,
\begin{align}
\label{H1bound}
\mathcal{H}_1 \gg XY^{1/2+(m-1)\varepsilon/2}(\log Y)^{\frac{m(m-3)}{2}}\gg XY^{1/2+(m-1)\varepsilon/2}(\log X)^{\frac{m(m-3)}{2}}.
\end{align}
  We also note that the above estimation continues to hold for $X^{\varepsilon} \ll Y\ll X^{1-\alpha}$ under GRH.

 We now apply \eqref{Holder}, \eqref{H2bound} and \eqref{H1bound} to see that for even integers $m \geq 4$, unconditionally for $X^{\varepsilon} \ll Y\ll X^{2/3-\alpha}$ for any $\alpha>0$,
   $$T_m(X,Y;f,\Phi)  \gg XY^{m/2}(\log X)^{\frac{m(m-3)}{2}},$$
while the above estimation continues to hold for $X^{\varepsilon} \ll Y\ll X^{1-\alpha}$ under GRH. This completes the proof of Theorem \ref{lower}.

\vspace*{.5cm}

\noindent{\bf Acknowledgments.}  P. G. is supported in part by NSFC grant 12471003.

\bibliography{biblio}
\bibliographystyle{amsxport}

\end{document}